
\documentclass[12pt]{article}

\usepackage{latexsym}
\usepackage{epsfig,graphics}
\usepackage{amsmath,amssymb,amsthm}
\usepackage{graphics,epsfig,calc}
\usepackage{upref}
\textwidth 170 mm
\textheight 230mm
\topmargin -10mm
\oddsidemargin -10mm
\evensidemargin  0mm

 \newcommand{\beqn}{\begin{eqnarray}}
 \newcommand{\eeqn}{\end{eqnarray}}
 \newcommand{\be}{\begin{equation}}
 \newcommand{\ee}{\end{equation}}
 \newcommand{\ba}{\begin{array}}
 \newcommand{\ea}{\end{array}}

 \newcommand{\pa}{\partial}
 \newcommand{\re}{\ref}
 \newcommand{\ci}{\cite}
 \newcommand{\ds}{\displaystyle}
 \newcommand{\la}{\label}

 \newcommand{\fr}{\frac}

\newcommand{\ov}{\overline}

\newcommand{\ti}{\tilde}

\newcommand{\cF}{{\cal F}}
\newcommand{\ve}{\varepsilon}

\newcommand{\de}{\delta}

\newcommand{\cH}{{\cal H}}

\newcommand{\cK}{{\cal K}}
\newcommand{\cL}{{\cal L}}

\newcommand{\cM}{{\cal M}}
\newcommand{\cO}{{\cal O}}

\newcommand{\cR}{{\cal R}}

\newcommand{\cV}{{\cal V}}
\newcommand{\bI}{{\bf I}}

\newcommand{\al}{\alpha}
\newcommand{\ga}{\gamma}
\newcommand{\Ga}{\Gamma}
\newcommand{\si}{\sigma}
\newcommand{\om}{\omega}
\newcommand{\vk}{\varkappa}

\newcommand{\na}{\nabla}

\newcommand{\lam}{\lambda}

\newcommand{\R}{\mathbb{R}}

\newcommand{\C}{\mathbb{C}}
\newcommand{\N}{\mathbb{N}}

\renewcommand{\theequation}{\thesection.\arabic{equation}}
\renewcommand{\thesection}{\arabic{section}}
\renewcommand{\thesubsection}{\arabic{section}.\arabic{subsection}}
\newtheorem{theorem}{Theorem}[section]

\renewcommand{\thetheorem}{\arabic{section}.\arabic{theorem}}
\newtheorem{defin}[theorem]{Definition}

\newtheorem{lemma}[theorem]{Lemma}
\newtheorem{remark}[theorem]{Remark}

\newtheorem{cor}[theorem]{Corollary}
\newtheorem{pro}[theorem]{Proposition}

\newcommand{\bd}{\begin{defin}}
 \newcommand{\ed}{\end{defin}}
\newcommand{\bt}{\begin{theorem}}
 \newcommand{\et}{\end{theorem}}
\newcommand{\bp}{\begin{pro}}
 \newcommand{\ep}{\end{pro}}
\newcommand{\bl}{\begin{lemma}}
\newcommand{\el}{\end{lemma}}
\newcommand{\bc}{\begin{cor}}
\newcommand{\ec}{\end{cor}}
\newcommand{\br}{\begin{remark} }
\newcommand{\er}{\end{remark}}

\pagestyle{myheadings} \pagenumbering{arabic}
 \markright{Weighted energy decay for
magnetic  Klein-Gordon  equation}

\begin{document}
\begin{titlepage}

\begin{center}
{\Large\bf
Weighted energy decay for \bigskip\\
magnetic Klein-Gordon equation} \vspace{1cm}
\\
{\large A.~I.~Komech}
\footnote{
Supported partly
 by the Alexander von Humboldt
Research Award and by the Austrian Science Fund (FWF): P22198-N13.}
\\
{\it Faculty of  Mathematics Vienna University\\
and Institute for Information Transmission Problems RAS}\\
 e-mail:~alexander.komech@univie.ac.at
\medskip\\

{\large E.~A.~Kopylova}
\footnote{Supported partly by the 
Austrian Science Fund (FWF): M1329-N13,
and RFBR grants.}\\
{\it Faculty of  Mathematics Vienna University\\
and Institute for Information Transmission Problems RAS}\\
e-mail:~elena.kopylova@univie.ac.at
\end{center}

\date{}

\vspace{0.5cm}
\begin{abstract}
\noindent We obtain a dispersive long-time decay in weighted
energy norms for solutions of  3D Klein-Gordon equation
with magnetic and scalar potentials.
The decay extends the results
obtained by Jensen and Kato for the  Schr\"odinger equation
with  scalar potential. For the proof we develop the spectral
theory of Agmon, Jensen and Kato and minimal escape velocities 
estimates of  Hunziker, Sigal and Soffer.
\smallskip

\noindent
{\em Keywords}: dispersion decay,  weighted energy
norms, magnetic  Klein-Gordon equation, resolvent.
\smallskip

\noindent
{\em 2000 Mathematics Subject Classification}: 35L10, 34L25, 47A40, 81U05
\end{abstract}

\end{titlepage}

\setcounter{equation}{0}
\section{Introduction}
We establish a dispersive long time decay
for  solutions to 3D  magnetic Klein-Gordon  equation
\be\la{KGE}
  \ddot\psi(x,t)=\big(\nabla-iA(x)\big)^2\psi(x,t)-m^2\psi(x,t)-
  V(x)\psi(x,t),\quad m>0.
\ee
For $s,\si\in\R$,
denote by $\cH^s_\si=\cH^s_\si (\R^3)$
the weighted Sobolev spaces introduced by Agmon, \ci{A},
with the finite norms
\be\la{norm}
  \Vert\psi\Vert_{\cH^s_\si}=\Vert\langle x
\rangle^\si\langle\na\rangle^s\psi\Vert_{L^2(\R^3)}<\infty,
\quad\quad \langle x\rangle=(1+|x|^2)^{1/2}.
\ee
\hspace{-0.15 cm} We assume that $V(x)\in C^1(\R^3)$, $A_j\in C^4(\R^3)$ are
real  functions, and for some $\beta>3$  the bounds hold
\be \label{V}
  |V(x)|+|\na V(x)|+\sum\limits_{|\al|\le 4}\sum\limits_{j=1}^3
  |D^\al A_j(x)|\le C\langle x\rangle^{-\beta},
  \quad x\in\R^3.
\ee
We restrict ourselves to the ``regular case'' in the terminology of
\cite{jeka} where the truncated resolvent of the corresponding  magnetic 
Schr\"odinger operator  
$$
H=(i\nabla+A\big)^2+V=-\Delta+2iA\cdot\na+i\nabla\cdot A+A^2+V
$$  
is bounded at the edge point
$0$  of the continuous spectrum. In other words, the point
$0$ is neither an eigenvalue nor a resonance for the operator
$H$; this holds for {\it generic potentials}.
\smallskip

In vector form,
equation (\ref{KGE}) reads
\be\la{KGEr}
  i\dot \Psi(t)=\cK\Psi(t),
\ee
where
$$
 \Psi(t)=\left(\begin{array}{c}
  \psi(t)
  \\
  \dot\psi(t)
  \end{array}\right),
~~~~~~~
\cK =\left(\begin{array}{cc}
  0               &   i
  \\
  i\big((\nabla-iA(x))^2-m^2-V(x)\big)   &   0
  \end{array}\right).
$$
Denote $\cF_{\si}=\cH^1_\si\oplus \cH^0_\si$,
and let $U(t):\cF_{0}\to\cF_{0}$ be the dynamical group of equation (\re{KGEr}).

Our main result is the following long time decay: in the regular case
for any $\sigma>5/2$ 
\be\label{full}
 \Vert U(t)\Psi(0)\Vert_{\cF_{-\si}}
 \le C (1+t)^{-3/2}\Vert\Psi(0)\Vert_{\cF_{\si}},\quad t>0
\ee
for  solutions to (\re{KGEr})
with  initial data $\Psi(0)$ from the space of continuous spectrum of $\cK$.
\smallskip

Let us comment on previous results in this direction.
The decay of type (\re{full}) in weighted norms has been established first
by Jensen and Kato \ci{jeka} for  3D Schr\"odinger equation 
with scalar potential. 
The result has been extended to more general PDEs of the Schr\"odinger type 
by Murata \ci{M}. The survey of the results can be found in \ci{Schlag}.
For the  Klein-Gordon and  wave equations with scalar potential, 
the weighed energy decay has been established in \ci{3Dkg} and \ci{wave},
and for the Dirac equation in \cite{Bo}.
The Strichartz estimates for magnetic Schr\"odinger, wave,
Klein-Gordon and Dirac equations with smallness conditions on the
potentials were obtained in \ci{AF2008, AF2010, GST}
and for magnetic Schr\"odinger equations with large potentials in \ci{EGS}.

The decay in  weighted  norms for magnetic Schr\"odinger equation 
has been established  in \ci{magS}.
For the  Klein-Gordon equations with magnetic potential, the decay
$\sim t^{-3/2}$ was obtained  by Vainberg \ci{V76, V89}
in local energy norms for initial data with a compact support.
However, the decay in weighed energy norms
for  magnetic Klein-Gordon equation was not obtain up to now.

Let us comment on our approach. We extend  the method of 
Jensen and Kato \ci{jeka}
to the Klein-Gordon  equation with  magnetic potential.
The main problem  consists in
the presence of the first order derivatives 
in the perturbation. 
These derivatives cannot be handled with the perturbation theory
like \ci{3Dkg} since the corresponding terms do not decay 
in suitable norms.
\smallskip

Our main novelties are Propositions \re{pr1} and \re{pr2} on
decay of propagators far from thresholds. First, we prove
the decay for magnetic Klein-Gordon equation with $V=0$.
The proof rely on the Mourre estimates for the operator
$B_A=((i\nabla+A\big)^2+m^2)^{1/2}$ and the
minimal escape velocity estimates  of Hunziker, Sigal and Soffer
\cite{HSS99} and their development by Boussaid \cite{Bo}.
Finally, we obtain the decay for the Klein-Gordon equation with $V\not=0$
using the Born perturbation series
and our recent results on the decay of the
magnetic Schr\"odinger resolvent \ci{magS}.

\setcounter{equation}{0}
\section{Spectral properties}\la{sp}
Denote $L^2=L^2(\R^3)$.
Similarly to \ci[p. 589]{jeka}, \ci[formula (3.1)]{M}
and \ci[\S 3.2]{magS},
let us introduce a generalized eigenspace $\cM$
for  the  Schr\"odinger operator 
$H$:
$$
{\cM}=\{\psi\in \cH^0_{-1/2-0}:\,~(1+A_0W)\psi=0\},
$$
where $A_0$ is the operator with the integral kernel $1/4\pi|x-y|$
and $W=2iA\cdot\na+i\nabla\cdot A+A^2+V$.
Functions $\psi\in \cM\cap L^2$ are the zero eigenfunctions of
$H$ and 
functions $\psi\in \cM\setminus L^2$ are  the {\bf zero
resonances} of  $H$.
\smallskip

Our key assumption is the following spectral condition (cf. Condition (i) in \ci[Theorem 7.2]{M}):
\be\la{SC}
\cM=0
\ee
In  other words, the point zero is neither an eigenvalue nor a resonance for
the operator $H$. Condition (\re{SC})
holds for {\it a generic} $W$.
Denote by $\cL (B_1,B_2)$ the Banach space of bounded linear operators
from a Banach space $B_1$ to a Banach space $B_2$.
Denote by $R(\om)=(H-\om)^{-1}$
the resolvent of the operator  $H$.
Let us  collect the properties of $R(\om)$ obtained in \cite{magS}
under conditions  (\ref{V}) and (\ref{SC}):
\begin{lemma}\la{sp1}
Let condition (\re{V})  holds. 
Then\\
i) For $\om>0$, the limiting absorption principle holds
\be\la{lapp}
R(\om\pm i\ve)\to R(\om\pm i0),\quad \ve\to 0+
\ee
in  $\cL(\cH^0_{\si}, \cH^2_{-\si})$ with $\si>1/2$.
\medskip\\
ii) For $k=0,1,2$,
$\si>1/2+k$,  and $s=0,1$, the asymptotics hold 
\be\la{H}
 \Vert R^{(k)}(\om)\Vert_{{\cal L}(\cH^s_\si;\cH^{s+l}_{-\si})}
 ={\cal O}(|\om|^{-\fr{1-l+k}2}),
  \quad |\om|\to\infty,\quad\om\in\C\setminus[0,\infty).
\ee
where $l=0,1$ for $s=0$, and $l=0,-1$ for $s=1$.
\end{lemma}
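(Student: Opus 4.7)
Since the statement only collects results already proved in \ci{magS}, my plan is to indicate how one deduces both parts from the well understood theory of the free resolvent $R_0(\om)=(-\De-\om)^{-1}$ together with the second resolvent identity. Writing $H=-\De+W$ with $W=2iA\cdot\na+i\na\cdot A+A^2+V$, one formally has
\[
R(\om)=[1+R_0(\om)W]^{-1}R_0(\om),
\]
so the behaviour of $R(\om)$ is governed by that of $R_0(\om)$ together with an analysis of when $1+R_0(\om)W$ is invertible.

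For part (i), I would start from the classical Agmon limiting absorption principle for $R_0$ in $\cL(\cH^0_\si,\cH^2_{-\si})$ for $\si>1/2$ and $\om>0$. Together with the decay (\ref{V}) of $A,V$ and Rellich-type compact embeddings of weighted Sobolev spaces, this shows that $R_0(\om\pm i0)W$ is compact on $\cH^2_{-\si}$, so invertibility of $1+R_0(\om\pm i0)W$ at $\om>0$ reduces via the Fredholm alternative to injectivity. Injectivity is the absence of embedded eigenvalues and resonances of $H$, which follows by a Kato--Agmon--Simon type unique continuation argument under (\ref{V}). Passing to the limit $\ve\to 0+$ in the factored formula above then yields (\ref{lapp}).

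For part (ii), the input is the classical high-energy bound for $R_0^{(k)}(\om)$, obtained either from the explicit 3D convolution kernel or from Agmon--H\"ormander estimates, which already produces the claimed rate $|\om|^{-(1-l+k)/2}$ in the relevant weighted Sobolev norms. One then iterates the second resolvent identity,
\[
R(\om)=\sum_{n=0}^{N-1}(-1)^n R_0(\om)[WR_0(\om)]^n+(-1)^N[R_0(\om)W]^N R(\om),
\]
and differentiates in $\om$ via the Leibniz rule, distributing the $k$ derivatives among the various $R_0$ factors; each summand inherits the stated decay rate and the remainder is controlled by a geometric series.

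The main obstacle, which distinguishes this situation from the purely scalar case of \ci{jeka,3Dkg}, is the first-order term $2iA\cdot\na$ in $W$: it costs a derivative that cannot be compensated by the spatial decay of $A$ alone, so a naive Neumann estimate on weighted spaces diverges at fixed $\om$. The point that makes the argument work is that at high energy each factor $R_0(\om)$ supplies an additional $|\om|^{-1/2}$ beyond what is needed to absorb this derivative, so the operator norm of $WR_0(\om)$ on the relevant weighted space tends to zero as $|\om|\to\infty$. This is the quantitative estimate carried out in \ci{magS}; once it is in place the Born series converges for large $|\om|$ and gives (\ref{H}), while the Fredholm argument above closes the LAP on compact subsets of $(0,\infty)$.
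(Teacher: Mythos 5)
Your sketch of part (i) is broadly in the spirit of what is done in \cite{magS} (Agmon's LAP for the free resolvent, compactness of $R_{\Delta}(\om\pm i0)W$ on weighted spaces, the Fredholm alternative, absence of positive eigenvalues), and as an outline it is acceptable. The problem is part (ii). The mechanism you invoke there --- that ``each factor $R_0(\om)$ supplies an additional $|\om|^{-1/2}$ beyond what is needed to absorb the derivative in $W$, so $\Vert WR_0(\om)\Vert\to 0$'' --- is false, and it is precisely the failure the paper flags in the introduction (``these derivatives cannot be handled with the perturbation theory \dots{} since the corresponding terms do not decay in suitable norms''). Quantitatively, $\Vert R_{\Delta}(\om)\Vert_{\cL(\cH^0_\si,\cH^{1}_{-\si})}=\cO(1)$ as $|\om|\to\infty$: each derivative costs a factor $|\om|^{1/2}$, which exactly cancels the $|\om|^{-1/2}$ gain of the $L^2_\si\to L^2_{-\si}$ bound. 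Hence $\Vert A\cdot\na R_{\Delta}(\om)\Vert$ in the relevant weighted norms is bounded but does not tend to zero (testing on outgoing waves concentrated at frequency $|\xi|\sim|\om|^{1/2}$ shows it is in fact bounded below). So the Neumann/Born series in the full first-order perturbation $W=2iA\cdot\na+\dots$ has no smallness at high energy; moreover your remainder $[R_0(\om)W]^{N}R(\om)$ presupposes the very high-energy bound on $R(\om)$ that is being proved, so the argument is also circular.

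The paper avoids this by never expanding in the magnetic term. For $s=1$ (the only part proved in this paper rather than quoted from \cite{magS}) the Appendix treats $H_0=(i\na+A)^2$ exactly: since $\sqrt{H_0+1}$ commutes with $R_{0}(\om)=(H_0-\om)^{-1}$ and is comparable to $\langle\na\rangle$ on weighted spaces by elliptic PDO calculus, the $\cH^1_\si\to\cH^{1+l}_{-\si}$ bounds are reduced to the already established $s=0$ bounds. The scalar potential is then added via the Born splitting $R(\om)=R_{0}(\om)[1+VR_{0}(\om)]^{-1}$, where the perturbation $V$ is zeroth order and decaying, so genuine smallness at large $|\om|$ is available; the derivatives are handled through the identities $R'=(1-RW)R_{\Delta}'(1-WR)$ and its analogue for $R''$, which require only boundedness of $RW$ and $WR$ (known from the $k=0$ case), not invertibility of $1+R_{\Delta}W$. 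Your part (ii) would need to be restructured along these lines.
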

Note that asymptotics (\re{H}) have been proved in \ci{magS} 
for $s=0$ only. In the case $s=1$ the proof is given  in Appendix.
\begin{lemma}\la{sp2}
Let conditions (\re{V}) and (\re{SC}) hold. Then \\
i) For $\si>1$  the  asymptotics hold
\be\la{lappexp}
  \Vert R(\om)-R(0)\Vert_{\cL(\cH^0_{\si},\cH^2_{-\si})}
  \to 0,\quad\om\to 0,\quad\om\in \C\setminus[0,\infty)
\ee
where  $R(0):\cH^0_{\si}\to \cH^2_{-\si'}$ is a continuous operator.
\medskip\\
ii) For $k=1,2$ and $\si>1/2+k$ the asymptotics hold
\be\la{Zca}
\Vert R^{(k)}(\om)\Vert_{\cL(\cH^0_{\si},\cH^2_{-\si})}
=\cO(|\om|^{1/2-k}),~~\om\!\to 0,~~\om\in \C\setminus[0,\infty).
\ee
\end{lemma}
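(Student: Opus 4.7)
Both parts rest on the second resolvent identity
\[
R(\om) = (I + R_0(\om) W)^{-1} R_0(\om), \qquad R_0(\om) = (-\De - \om)^{-1},
\]
combined with the classical 3D low-energy expansion of the free resolvent,
\[
R_0(\om)(x,y) = \frac{1}{4\pi|x-y|} + \frac{i\sqrt{\om}}{4\pi} - \frac{\om|x-y|}{8\pi} + O(|\om|^{3/2}),
\]
with $\sqrt{\om}$ chosen to have positive imaginary part for $\om\in\C\setminus[0,\infty)$. The first step is to show that under (\re{SC}) the operator $I+A_0W$ is invertible on the relevant weighted space: $A_0W$ is compact there, because $A_0$ smooths by two derivatives and $W$ supplies decay of order $\beta>3$, so the analytic Fredholm alternative together with the triviality of $\cM$ yields invertibility. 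Continuity of inversion then furnishes a uniform bound for $(I+R_0(\om)W)^{-1}$ when $|\om|$ is small.

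For part (i), write $R(0)=(I+A_0W)^{-1}A_0$ and observe
\[
R(\om) - R(0) = (I+R_0(\om)W)^{-1}\,[R_0(\om)-A_0]\,(I - WR(0)).
\]
The convergence $R_0(\om)\to A_0$ in $\cL(\cH^0_\si,\cH^2_{-\si})$ for $\si>1$ follows from a weighted Schur-type bound on the kernel difference $(e^{i\sqrt{\om}|x-y|}-1)/(4\pi|x-y|)$, using the elementary estimate $|e^{i\sqrt{\om}|x-y|}-1|\le\min(|\sqrt{\om}|\,|x-y|,\,2)$. Combined with the uniform bound on the inverse and the boundedness of $I-WR(0)$, this proves (\re{lappexp}).

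For part (ii), differentiate the resolvent identity to get
\[
R'(\om) = (I+R_0(\om)W)^{-1}\,R_0'(\om)\,(I-WR(\om)),
\]
and similarly for $R''(\om)$, where an additional $R_0''$ term appears. The singular parts are inherited entirely from $R_0^{(k)}$: the expansion above gives $R_0'(\om)=\frac{i}{8\pi\sqrt{\om}}T+O(1)$ and $R_0''(\om)=-\frac{i}{16\pi\,\om^{3/2}}T+O(|\om|^{-1/2})$, where $T$ is the integral operator with kernel identically~$1$. Since $T$ maps $\cH^0_\si\to\cH^0_{-\si}\subset\cH^2_{-\si}$ exactly when $\si>3/2$ (so that constants lie in $\cH^0_{-\si}$), one recovers the threshold $\si>1/2+k$ and the prefactor $|\om|^{1/2-k}$ required by (\re{Zca}).

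The main obstacle is that $W=2iA\cdot\na+i\na\cdot A+A^2+V$ is a \emph{first-order} differential operator rather than a multiplication operator, so it loses one derivative on the Sobolev scale. One must therefore track the mapping properties of every factor in $(I+R_0W)^{-1}R_0$ with care: the derivative lost by $W$ has to be recovered by the smoothing of $R_0$ together with the $\beta>3$ decay of $A$. This is precisely where the refined weighted resolvent bounds of Lemma~\re{sp1} (including the $s=1$ case treated in the Appendix) become indispensable, since without them the perturbation identity above would not close in the weighted Sobolev norms of (\re{lappexp}) and (\re{Zca}).
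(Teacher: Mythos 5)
The paper does not actually prove this lemma: Lemma \ref{sp2} is introduced with ``Let us collect the properties of $R(\om)$ obtained in \cite{magS}'' and is imported wholesale from that reference, the only resolvent estimate proved in the present paper being the high-energy $s=1$ case of Lemma \ref{sp1}~ii) in the Appendix. So there is no in-paper argument to compare against; what you have done is reconstruct the canonical Jensen--Kato/Murata low-energy route that \cite{magS} itself follows: the second resolvent identity relative to $-\De$, invertibility of $1+A_0W$ from condition (\ref{SC}) via the Fredholm alternative, the identity $R(\om)-R(0)=(1+R_0(\om)W)^{-1}\,[R_0(\om)-A_0]\,(1-WR(0))$ (which is algebraically correct), and extraction of the $|\om|^{1/2-k}$ singularity from the leading term $T$ (kernel $\equiv 1$) in the expansion of $R_0^{(k)}(\om)$. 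The weight thresholds $\si>1$ and $\si>1/2+k$ do come out exactly as you describe.

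Two caveats. First, the difficulty you flag in your closing paragraph is not a side remark to be ``tracked with care''---it is the entire mathematical content of the magnetic case, and your proposal names it without resolving it. Because $W=2iA\cdot\na+i\na\cdot A+A^2+V$ is a first-order operator, one must actually prove that $A_0W$ is compact on $\cH^0_{-\si}$ and, more importantly, that $R_0(\om)W\to A_0W$ in operator norm there as $\om\to0$; this requires convergence of $R_0(\om)\to A_0$ in topologies of the type $\cL(\cH^{-1}_{\si'},\cH^1_{-\si})$, not merely the scalar $\cL(\cH^0_{\si'},\cH^0_{-\si})$ convergence that the kernel estimate $|e^{i\sqrt{\om}r}-1|\le\min(|\sqrt{\om}|r,2)$ directly gives. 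Without that, ``continuity of inversion'' does not yield the uniform bound on $(1+R_0(\om)W)^{-1}$ near $\om=0$, and neither part of the lemma closes; the same issue recurs in every composition $R_0^{(k)}W$, $WR$ appearing in part (ii). Second, a small slip: $\cH^0_{-\si}\not\subset\cH^2_{-\si}$ (the inclusion runs the other way on the Sobolev scale); your conclusion survives only because the range of $T$ consists of constants, which do belong to $\cH^2_{-\si}$ for $\si>3/2$.
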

Denote $\Ga:=(-\infty,-m)\cup(m,\infty)$
and let  $\cR(\om)=(\cK-\om)^{-1}$ be
the resolvent of the operator $\cK$. 
The resolvent $\cR$ can be
expressed in terms of the resolvent $R$:
\begin{equation}\label{RKG}
   \cR(\omega)=
   \left(\begin{array}{cc}
      \omega R(\omega^2-m^2)           &   iR(\omega^2-m^2)
   \\
  -i(1 +\omega^2 R(\omega^2-m^2))      &   \omega R(\omega^2-m^2)
     \end{array}
   \right)
\end{equation}
Hence, the properties of $R$  imply the corresponding
properties of $\cR$:
\begin{lemma}\la{SP}
Let conditions (\re{V}) and (\re{SC}) hold. Then \\
i) The limiting absorption principle holds:
\be\la{lap1}
 \cR(\om\pm i\ve)\to \cR(\om\pm i0),\quad\om\in\Ga\quad\ve\to 0+
\ee
in  $\cL (\cF_\si,\cF_{-\si})$ with $\si>1/2$.
\medskip\\
ii) For  $\om\in\C\setminus\ov\Gamma$ the  asymptotics hold
\beqn\la{expRKG}
\!\!\!\Vert \cR(\om)\Vert_{\cL (\cF_{\si},\cF_{-\si})}
\!\!&=&\!\!\cO(1),\quad \om\pm m\to 0,\quad \si>1\\
\la{R0dif}
\!\!\!\Vert \cR^{(k)}(\om)\Vert_{\cL (\cF_{\si},\cF_{-\si})}
\!\!&=&\!\!\cO(|\om\pm m|^{1/2-k}),\quad\om\pm m\to 0,\quad \si>1/2+k,
\quad k=1,~2,...
\eeqn
iii) For $k=0,1,2,...$ and  $\si>1/2+k$ the asymptotics hold
  \begin{equation}\label{bR0}
    \Vert \cR^{(k)}(\om) \Vert_{\cL (\cF_{\sigma},\cF_{-\sigma})}=\cO(1)
    \quad\om\to\infty,\quad\om\in\C\setminus\Ga
  \end{equation}
\end{lemma}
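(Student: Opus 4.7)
The plan is to derive all three parts from the matrix formula (\ref{RKG}) by entrywise analysis, exploiting Lemmas~\ref{sp1}--\ref{sp2} with the substituted argument $\zeta:=\omega^2-m^2$. Each entry of $\cR(\omega)$ is a polynomial of degree at most two in $\omega$ times $R(\zeta)$, plus a constant summand in position $(2,1)$. Since the four blocks act between different summands of $\cF_\si=\cH^1_\si\oplus\cH^0_\si$ and $\cF_{-\si}$, several cases $(s,l)$ of Lemma~\ref{sp1} will be invoked: $s=1,l=0$ for $(1,1)$, $s=0,l=1$ for $(1,2)$, $s=1,l=-1$ for $(2,1)$, and $s=0,l=0$ for $(2,2)$.

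For (i) note that $\omega\in\Ga$ is equivalent to $\zeta>0$, so the limit (\ref{lap1}) follows from (\ref{lapp}) entrywise; boundedness of each block as a map $\cF_\si\to\cF_{-\si}$ reduces to $R(\zeta\pm i0):\cH^0_\si\to\cH^2_{-\si}$ composed with the Sobolev embeddings $\cH^2_{-\si}\hookrightarrow\cH^1_{-\si}\hookrightarrow\cH^0_{-\si}$ and $\cH^1_\si\hookrightarrow\cH^0_\si$.

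For (ii), near $\omega=\pm m$ one has $\zeta\to 0$ with $|\zeta|=|\omega-m|\,|\omega+m|$ comparable to $|\omega\pm m|$, since the other factor stays bounded away from $0$. Then (\ref{expRKG}) is immediate from (\ref{lappexp}). For the $k$-th derivative, the plan is to apply Fa\`a di Bruno to $R(g(\omega))$ with $g(\omega)=\omega^2-m^2$: since $g'=2\omega$, $g''=2$ and $g^{(l)}=0$ for $l\ge 3$, the expansion collapses to a finite sum of terms $c_{k,j}\,\omega^{2j-k}R^{(j)}(\zeta)$ with $\lceil k/2\rceil\le j\le k$. By (\ref{Zca}) each summand is $\cO(|\omega\pm m|^{1/2-j})$, and the worst singularity, at $j=k$, yields (\ref{R0dif}); a Leibniz step handles the outer prefactors $\omega,\omega^2$ in (\ref{RKG}) without worsening the order.

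For (iii) the same Fa\`a di Bruno expansion is used, now with $|\zeta|\sim|\omega|^2$; the point is that the polynomial prefactors of (\ref{RKG}) exactly compensate the decay of $R^{(j)}(\zeta)$ supplied by (\ref{H}). For instance, the $(1,1)$ block contributes $\omega\cdot\omega^{2j-k}|\zeta|^{-(1+j)/2}=\cO(|\omega|^{j-k})=\cO(1)$, and analogous computations in the other three blocks, with $(s,l)$ chosen as above, all give $\cO(1)$; in $(2,1)$ the prefactor $\omega^2$ precisely absorbs the two extra powers of $|\zeta|^{-1/2}$ arising from $l=-1$. The main obstacle, and the only genuine work, is exactly this entrywise bookkeeping of Sobolev orders against compensating powers of $\omega$ in part~(iii); once it is carried out, (\ref{bR0}) falls out directly.
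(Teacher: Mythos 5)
Your proof is correct and follows exactly the route the paper intends: the paper gives no details for Lemma~\ref{SP}, merely asserting that the properties of $R$ transfer to $\cR$ via the matrix formula (\ref{RKG}), and your entrywise bookkeeping with the block-dependent choices of $(s,l)$ in Lemma~\ref{sp1} together with the chain-rule expansion of $R(\omega^2-m^2)$ is precisely the omitted verification. The only caveat is that Lemmas~\ref{sp1}--\ref{sp2} supply the input asymptotics only for $k\le 2$, so your argument (like the paper's) establishes (\ref{R0dif}) and (\ref{bR0}) only for $k\le 2$, which is all that is used in the sequel.
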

Under  conditions (\re{V}) and  (\re{SC})
the representation holds
\be\la{srelap}
U(t)P_c(\cK)\Psi(0)=\fr{1}{2\pi i}\int_\Gamma
e^{-i\om t}[\cR(\om+i0)-\cR(\om-i0)]\Psi(0)~d\om,~~~t\in\R
\ee
for initial state $\Psi(0)\in\cF_\si$ with $\si>1$.
Here
$$
P_c(\cK)=\bI_{\ov\Ga}(\cK)
$$
is the projector associated with the continuous spectrum of $\cK$.
The representation (\re{srelap}) follows from  the Cauchy residue theorem,
and  Lemma \re{SP} (cf. \ci[\S 2.2]{3Dkg}).
\setcounter{equation}{0}
\section{ Time decay}
We are now able to state our main result
\begin{theorem}\la{wed} (Weighed energy decay)
Let assumptions  (\re{V}) and  (\re{SC}) hold.
Then for $\si>5/2$ the time decay holds
\be\la{WD}
\Vert U(t)P_c(\cK)\Vert_{\cL (\cF_\si,\cF_{-\si})}
\le C\langle t\rangle^{-3/2},\quad t\in\R.
\ee
\end{theorem}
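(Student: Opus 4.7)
My plan is to analyse the spectral representation \eqref{srelap} by splitting the integral into a near-threshold piece and an away-from-threshold piece, and handling each separately. Pick a smooth cutoff $\chi_0(\om)$ supported in a small neighbourhood of $\{-m,m\}$ and equal to one near those points, set $\chi_1:=1-\chi_0$, and decompose
$$
U(t)P_c(\cK)\Psi(0)=I_0(t)+I_1(t),\quad I_j(t):=\frac{1}{2\pi i}\int_\Gamma e^{-i\om t}\chi_j(\om)[\cR(\om+i0)-\cR(\om-i0)]\Psi(0)\,d\om.
$$

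The away-from-threshold contribution $I_1(t)$ is exactly what Propositions \ref{pr1} and \ref{pr2} (the main novelties announced in the introduction) are designed to control: they yield the $\langle t\rangle^{-3/2}$ decay of the propagator cut off away from the thresholds $\pm m$. Equivalently, on the support of $\chi_1$ the boundary values $\cR(\om\pm i0)$ are smooth by Lemma \ref{SP}(i),(iii) with bounded derivatives \eqref{bR0}, so integration by parts in $\om$ yields the decay, the high-frequency tail being controlled via the extra $|\om|^{-1/2}$ hidden in \eqref{RKG} by the scalar asymptotics \eqref{H}.

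For the near-threshold contribution $I_0(t)$, I would implement the Jensen--Kato expansion at each threshold. By Lemma \ref{SP}(ii), the bounds \eqref{expRKG}--\eqref{R0dif} imply a Puiseux-type asymptotics
$$
\cR(\om\pm i0)=A_0+(\om\mp m)^{1/2}_\pm A_1+(\om\mp m)A_2+(\om\mp m)^{3/2}_\pm A_3+r_\pm(\om),\quad \om\to\pm m,
$$
in $\cL(\cF_\si,\cF_{-\si})$ with $\si>5/2$, where the subscript $\pm$ on $(\om\mp m)^{k/2}$ denotes the boundary value of the principal branch from above or below the cut, and the remainder satisfies $\|r_\pm''(\om)\|=\cO(|\om\mp m|^{-1/2})$. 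Subtracting the two boundary values kills the integer-power terms and leaves a jump proportional, modulo a smoother remainder, to $(\om\mp m)^{1/2}$ (restricted to the spectral side of each threshold) times a bounded operator. The decay then follows from the classical oscillatory integral estimate
$$
\Big|\int e^{-i\om t}(\om\mp m)_+^{1/2}\chi_0(\om)\,d\om\Big|\le C\langle t\rangle^{-3/2}
$$
(by van der Corput, or direct evaluation via Fresnel integrals); the cubic term contributes $\cO(\langle t\rangle^{-5/2})$ and the remainder $r_\pm$ gives faster decay after one integration by parts, exploiting the integrability of $|\om\mp m|^{-1/2}$ from \eqref{Zca}.

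The main obstacle is to justify the Puiseux expansion above in the operator norm $\cL(\cF_\si,\cF_{-\si})$ with $\si>5/2$. This amounts to transporting the low-energy expansion of the scalar Schr\"odinger resolvent $R(\lam)$ near $\lam=0$ (from Lemma \ref{sp2}, via \eqref{lappexp}--\eqref{Zca}) through the change of variables $\lam=\om^2-m^2$ encoded in the block representation \eqref{RKG}; the Jacobian $2\om$ and the half-weight lost per derivative in \eqref{Zca} together account for the threshold $\si>5/2$ in the final statement. Once the expansion is in place, the remainder of the proof is the standard Jensen--Kato stationary-phase argument combined with the away-from-threshold decay furnished by Propositions \ref{pr1} and \ref{pr2}.
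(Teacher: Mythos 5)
Your overall architecture coincides with the paper's: the same threshold/non-threshold splitting of the spectral representation (\ref{srelap}), with Propositions \ref{pr1} and \ref{pr2} carrying the high-energy part and a Jensen--Kato argument carrying the low-energy part. Two points need correction.

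First, your parenthetical claim that the high-energy part can ``equivalently'' be obtained by integrating by parts in $\om$, using the bounds (\ref{bR0}) and ``the extra $|\om|^{-1/2}$ hidden in (\ref{RKG})'', does not work, and the paper says so explicitly: in the energy norm $\cL(\cF_\si,\cF_{-\si})$ the resolvent $\cR(\om)$ and its derivatives are only $\cO(1)$ as $\om\to\infty$. Indeed, the entry $-i\big(1+\om^2R(\om^2-m^2)\big)$ of (\ref{RKG}) contains the identity, and the explicit factors $\om$, $\om^2$ exactly consume the decay of $R(\om^2-m^2)$ recorded in (\ref{H}); there is no leftover negative power. Consequently, after two integrations by parts the integrand is not integrable over the unbounded set $\Gamma$, and this failure is precisely why the paper develops the Mourre estimates and minimal-escape-velocity bounds of Sections 4 and 5. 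Your primary route through Propositions \ref{pr1} and \ref{pr2} is the correct one; the ``equivalent'' alternative should be deleted. (Also note that those propositions require $\supp\chi\subset\Gamma$, so $\chi_0$ must equal one on a neighborhood of the whole interval $[-m,m]$, not merely of the two points $\pm m$, in order that $\chi_1=1-\chi_0$ vanish on the spectral gap.)

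Second, for the near-threshold part you propose a four-term Puiseux expansion of $\cR(\om\pm i0)$ with identified operator coefficients $A_0,\dots,A_3$, and you correctly flag its justification as the main obstacle. This is both more than the paper's lemmas supply --- Lemma \ref{SP}(ii) gives only the norm bounds (\ref{expRKG})--(\ref{R0dif}) on $\cR$ and its first two derivatives, and Lemma \ref{sp2} gives no asymptotic expansion of $R(\lam)$ at $\lam=0$, only continuity and derivative bounds (\ref{Zca}) --- and more than is needed. The bounds $\cR'(\om)=\cO(|\om\mp m|^{-1/2})$ and $\cR''(\om)=\cO(|\om\mp m|^{-3/2})$ for $\si>5/2$, inserted directly into the Fourier-transform lemma of Jensen and Kato (Lemma 10.2 of \cite{jeka}), already yield the estimate (\ref{WD1}) with the rate $\langle t\rangle^{-3/2}$, with no need to identify any expansion coefficients or to track the cancellation of integer powers between the two boundary values. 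Replacing your expansion step by this direct application closes the gap and reproduces the paper's proof.
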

We prove the decay separately for the components of the solution
near thresholds and far from thresholds. More precisely, we choose a function 
$\chi_m\in C_0^{\infty}(\R)$   supported in
a sufficiently small neighborhood of $[-m,m]$. 
Then
\be\la{WDF}
\Vert U(t)P_c(\cK)\Vert_{\cL (\cF_\si,\cF_{-\si})}
\le\Vert U(t)\chi_m(\cK)P_c(K)\Vert_{\cL (\cF_\si,\cF_{-\si})}
+\Vert U(t)(1-\chi_m)(\cK)\Vert_{\cL (\cF_\si,\cF_{-\si})}.
\ee
The decay of  the first {\it low energy  component} can be treated by 
the method of  Jensen and Kato \cite{jeka}. Namely, using the spectral 
representation (cf. (\re{srelap}))
\be\la{srelap1}
U(t)\chi_m(\cK)P_c(\cK)\Psi(0)=\fr{1}{2\pi i}\int_\Gamma
e^{-i\om t}\chi_m(\om)[\cR(\om+i0)-\cR(\om-i0)]\Psi(0)~d\om,~~~t\in\R,
\ee
and   asymptotics (\re{expRKG}) - (\re{R0dif}), we obtain for $\si>5/2$
\be\la{WD1}
\Vert U(t)\chi_m(\cK)P_c(\cK)\Vert_{\cL (\cF_\si,\cF_{-\si})}
\le C(\si)\langle t\rangle^{-3/2},\quad t\in\R
\ee
by \ci[Lemma 10.2]{jeka}.
To treat the decay of the second {\it high energy component}
we cannot use the spectral representation since the resolvent $\cR(\om)$
does not decay in $\cL (\cF_\si,\cF_{-\si})$ as $\om\to\infty$
(see (\ref{bR0})).
We obtain the required decay in the following way.
First, we consider the
Klein-Gordon equation  without a scalar 
potential, i.e with $V= 0$:
\be\la{KGEA}
i\dot\Psi(t)=\cK_0\Psi(t),
~~~~~~~
\cK_0=\left(\begin{array}{cc}
  0               &   i
  \\
  i\big((\nabla-iA(x))^2-m^2\big)   &   0
  \end{array}\right).
\ee
Denote $U_0(t):\cF_0\to\cF_0$ the dynamical group of equation (\re{KGEA}).
Applying  the minimal escape velocity estimates  of Hunziker, Sigal and Soffer
\cite[Theorem 1.1.]{HSS99} and their modification  
\cite[Proposition 2.2]{Bo} we will prove
\begin{pro}\la{pr1} (The case $V= 0$)
Let assumption  (\re{V})  hold.
Then for any bounded $\chi\in C^\infty(\R)$
supported in $\Ga$, any $\si\ge 2$ and  any $\ve>0$
the decay holds
\be\la{WD2}
\Vert U_0(t)\chi(\cK_0)\Vert_{\cL (\cF_\si,\cF_{-\si})}
\le C(\ve)\langle t\rangle^{-2+\ve},\quad t\in\R.
\ee
\end{pro}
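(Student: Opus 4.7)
To prove Proposition \re{pr1} the plan is to reduce the Klein-Gordon dynamics without scalar potential to the scalar propagators $e^{\mp itB_A}$ generated by $B_A=((i\na+A)^2+m^2)^{1/2}$, and then to extract polynomial decay in weighted norms via the Mourre theory together with the minimal escape velocity estimates of \ci{HSS99, Bo}. \emph{Step 1 (Diagonalization).} A direct computation gives $\cK_0^2=B_A^2\,\bI$ on $\cF_0$. Introducing the positive and negative frequency components
\[
\Phi_\pm(t)=\tfrac 12\bigl(\psi(t)\pm iB_A^{-1}\dot\psi(t)\bigr),
\]
one verifies $i\dot\Phi_\pm=\pm B_A\Phi_\pm$, so $\Phi_\pm(t)=e^{\mp itB_A}\Phi_\pm(0)$. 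The linear map $\Psi\mapsto(\Phi_+,\Phi_-)$ intertwines $U_0(t)$ with the diagonal propagator $\bigl(e^{-itB_A},e^{+itB_A}\bigr)$, sending the cutoff $\chi(\cK_0)$ with $\supp\chi\subset\Ga$ to $\bigl(\chi(B_A),\chi(-B_A)\bigr)$, each factor supported in $(m,\infty)$. Since $B_A^{\pm 1}$ maps $\cH^1_\si\leftrightarrow\cH^0_\si$ up to weight-commuting lower-order terms, Proposition \re{pr1} reduces to the scalar bound $\|\langle x\rangle^{-\si}e^{\mp itB_A}\chi(\pm B_A)\langle x\rangle^{-\si}\|_{L^2\to L^2}\le C(\ve)\langle t\rangle^{-2+\ve}$.

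\emph{Step 2 (Mourre estimate for $B_A$).} Choose the generator of dilations $A_d=\tfrac 12(x\cdot p+p\cdot x)$, $p=-i\na$, as conjugate operator. The pointwise bound (\re{V}) implies that $H_0=(i\na+A)^2+m^2$ is of class $C^4(A_d)$ and satisfies the strict Mourre estimate
\[
E_J(H_0)\,[H_0,iA_d]\,E_J(H_0)\ge \theta\,E_J(H_0)+K_J
\]
on every compact $J\subset(m^2,\infty)$, with $\theta>0$ and $K_J$ compact. Transferring this estimate to $B_A=H_0^{1/2}$ through the Helffer--Sj\"ostrand functional calculus yields the analogous Mourre estimate on compact subintervals of $(m,\infty)$, and symmetrically on $(-\infty,-m)$.

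\emph{Step 3 (Propagation and conclusion).} Invoking the Hunziker--Sigal--Soffer estimate \ci[Thm.\ 1.1]{HSS99} in the form \ci[Prop.\ 2.2]{Bo} adapted to square-root generators, we obtain, for every $\chi\in C_0^\infty(\Ga)$ and $\si\ge 2$, $\ve>0$, the scalar bound of Step 1 at rate $\langle t\rangle^{-\si+\ve}\le\langle t\rangle^{-2+\ve}$. For a merely bounded $\chi\in C^\infty(\R)$ supported in $\Ga$, split $\chi=\chi_{\rm loc}+\chi_\infty$ with $\chi_{\rm loc}\in C_0^\infty$; the localized piece is treated as above, and the high-energy tail $\chi_\infty(B_A)$ is handled by combining the smoothing effect of $\chi_\infty$ at high frequencies with the dispersive $L^1\to L^\infty$ bound for $e^{-itB_A}$ and Sobolev embedding into the weighted $L^2$ scale. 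The main obstacle is Step 2: the Mourre estimate for the non-local operator $B_A=H_0^{1/2}$ and the attendant iterated commutators $[\cdot,iA_d]$ must be tracked through the functional calculus while the magnetic terms decay only pointwise by (\re{V}), not in operator norm. This is precisely the scenario addressed by Boussaid's adaptation \ci{Bo} of the minimal escape velocity machinery, which the authors invoke.
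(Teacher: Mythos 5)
Your overall skeleton matches the paper's: reduce $U_0(t)\chi(\cK_0)$ to the scalar propagator $e^{\mp itB}$ with $B=((i\na-A)^2+m^2)^{1/2}$ (the paper uses the matrix formula (\re{HB-rep}) rather than the $\Phi_\pm$ diagonalization, but these are equivalent), then prove a Mourre estimate and invoke the minimal escape velocity bounds of \ci{HSS99,Bo}. However, two of your steps have genuine gaps.

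First, the passage from the HSS propagation estimate to the weighted bound in $\langle x\rangle^{-\si}$ is missing. What \ci{HSS99} delivers is a bound of the form $\Vert\bI_{P_B\le a+v|t|}e^{-itB}\chi(B)\bI_{P_B\ge a}\Vert\le C\langle t\rangle^{-2+\ve}$, i.e.\ localization in the spectral projections of the \emph{conjugate operator}, not in $\langle x\rangle$. The paper must then prove that $\langle P_B\rangle^{-\si}\langle x\rangle^{\si}$ is bounded on $L^2$ (via the multi-commutator expansion of \ci{HS00} and the identity of \ci{SS98}, following \ci{Bo}), and this is exactly why it chooses the modified conjugate operator $P_B=PB^{-1}+B^{-1}P$ rather than the plain dilation generator $A_d=\frac 12(x\cdot p+p\cdot x)$ that you propose: $A_d$ is first order in momentum as well as in $x$, so $\langle A_d\rangle^{-\si}\langle x\rangle^{\si}$ is not bounded, and your Step 3 simply asserts the weighted-$x$ bound without addressing this conversion at all. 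Relatedly, the paper's choice of $P_B$ makes $i[B,P_B]=\frac{B^2-m^2}{B^2}+J$ with $J$ compact, computed via the Kato square-root formula; your "transfer through Helffer--Sj\"ostrand" of the $H_0$-Mourre estimate is plausible in spirit but leaves unverified the boundedness of the iterated commutators $ad^k(B)$ ($k\le 3$) that \ci{HSS99} requires, which the paper proves explicitly in Lemma \re{mev}.

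Second, your treatment of the high-energy tail $\chi_\infty$ fails. You invoke "the dispersive $L^1\to L^\infty$ bound for $e^{-itB_A}$", but no such bound is available for the magnetic Klein--Gordon propagator with large potentials --- the introduction of the paper notes that even Strichartz estimates are known only under smallness conditions on $A$ (\ci{AF2008,AF2010,GST}). The paper avoids any compact-support reduction altogether: it proves a \emph{high-energy} Mourre estimate (\re{ME}), $\bI_{|B|\ge m+\nu}\,i[B,P_B]\,\bI_{|B|\ge m+\nu}\ge\theta\,\bI_{|B|\ge m+\nu}$, valid on the whole unbounded spectral region because $\frac{B^2-m^2}{B^2}\to 1$ at high energy, so that Lemma \re{mev} applies directly to any bounded $\chi\in C^\infty$ supported in $\Ga$. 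Your Step 2 only states the Mourre estimate on compact intervals, which is what forces you into the untenable tail argument. Finally, the claimed rate $\langle t\rangle^{-\si+\ve}$ for arbitrary $\si\ge 2$ overstates what three bounded commutators yield; the correct output, and all that is needed, is $\langle t\rangle^{-2+\ve}$.
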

Finally, we will prove the decay for the Klein-Gordon 
equation with $V\not =0$ using 
the Born perturbation series (\re{id}).
\begin{pro}\la{pr2}(The case $V\not = 0$)
Let assumption  (\re{V}) hold.
Then for any bounded $\chi\in C^\infty(\R)$ supported in $\Ga$,
any  $\si>5/2$ and any $\ve>0$ the decay holds
\be\la{WD3}
\Vert U(t)\chi(\cK)\Vert_{\cL (\cF_\si,\cF_{-\si})}
\le C(\si,\ve)\langle t\rangle^{-2+\ve},\quad t\in\R.
\ee
\end{pro}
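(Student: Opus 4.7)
My plan is to derive a Born perturbation series for the full resolvent $\cR(\omega)$ in terms of the free-magnetic resolvent $\cR_0(\omega):=(\cK_0-\omega)^{-1}$ and the scalar perturbation, and then insert this expansion into the spectral representation (\re{srelap}) restricted by the cutoff $\chi$. Write $\cK=\cK_0+\cW$ with
$$
\cW=-i\begin{pmatrix}0 & 0 \\ V & 0\end{pmatrix},
$$
so that by (\re{V}) the perturbation satisfies $\cW\in\cL(\cF_{\sigma_1},\cF_{\sigma_2})$ whenever $\sigma_2-\sigma_1\le\beta$. Iterating the second resolvent identity twice gives
$$
\cR(\omega)=\cR_0(\omega)-\cR_0(\omega)\cW\cR_0(\omega)+\cR_0(\omega)\cW\cR_0(\omega)\cW\cR(\omega),
$$
which I take to be the Born identity of interest. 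Substituting into
$$
U(t)\chi(\cK)P_c(\cK)=\frac{1}{2\pi i}\int_\Gamma e^{-i\omega t}\chi(\omega)\bigl[\cR(\omega+i0)-\cR(\omega-i0)\bigr]d\omega
$$
splits $U(t)\chi(\cK)$ into a leading free piece, a first-order correction, and a remainder still carrying $\cR(\omega)$.

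The leading term reduces to $U_0(t)\chi(\cK_0)$, whose decay in $\cL(\cF_\sigma,\cF_{-\sigma})$ is already $\cO(\langle t\rangle^{-2+\epsilon})$ by Proposition~\re{pr1}. For the first-order correction I would expand the jump of $\cR_0\cW\cR_0$ by the telescoping identity
$$
\cR_0(\omega+i0)\cW\cR_0(\omega+i0)-\cR_0(\omega-i0)\cW\cR_0(\omega-i0)=[\cR_0(\omega+i0)-\cR_0(\omega-i0)]\cW\cR_0(\omega-i0)+\cR_0(\omega+i0)\cW[\cR_0(\omega+i0)-\cR_0(\omega-i0)],
$$
factor $\chi=\chi_1\chi_2$ by a smooth partition of unity in $\omega$, and identify each resulting oscillatory integral as a time convolution
$$
\int_0^t U_0(t-s)\chi_1(\cK_0)\,\cW\,U_0(s)\chi_2(\cK_0)\,ds
$$
via the distributional identities $\cR_0(\omega\pm i0)=\mp i\int_0^{\pm\infty}e^{i\omega s}U_0(s)\,ds$. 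Applying Proposition~\re{pr1} to each free factor at appropriately chosen intermediate weights (the mapping properties of $\cW$ supplied by (\re{V}) allow one to switch weight signs in between), the elementary bound $\langle t-s\rangle^{-2+\epsilon}*\langle s\rangle^{-2+\epsilon}\le C\langle t\rangle^{-2+\epsilon}$ then yields the desired rate.

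The remainder $\cR_0\cW\cR_0\cW\cR$ is the delicate part, because $\cR(\omega)$ does not decay as $|\omega|\to\infty$. I would integrate by parts twice in $\omega$ inside the oscillatory integral, exploiting the smoothness and $\Gamma$-support of $\chi$, the uniform derivative bounds on $\cR_0^{(k)}(\omega)$ from (\re{H}), and the bounds on $\cR^{(k)}(\omega)$ from (\re{bR0}), in order to extract two factors of $(it)^{-1}$ and reduce to an absolutely convergent integral. The main obstacle I anticipate is the weight bookkeeping: the intermediate weights must be chosen so that each application of $\cW$ stays within the decay budget $\beta>3$ from (\re{V}), each free propagator has input weight at least $2$ as demanded by Proposition~\re{pr1}, and the outermost weights land in $\cL(\cF_\sigma,\cF_{-\sigma})$. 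Balancing these constraints is what forces the sharper hypothesis $\sigma>5/2$ compared to the $\sigma\ge 2$ of Proposition~\re{pr1}, and making the telescoping, the partition of unity, the distributional identifications, the convolution bounds, and the tail integration by parts simultaneously rigorous while keeping the weight chain consistent is where the bulk of the technical work lies.
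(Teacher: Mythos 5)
Your decomposition is exactly the paper's: the twice-iterated Born series splits $U(t)\chi(\cK)$ into a free term, a first-order term and a remainder; the free term is handled by Proposition \re{pr1}, and your treatment of the first-order term as a time convolution of two cut-off free propagators, bounded via $\langle t-s\rangle^{-2+\ve}\ast\langle s\rangle^{-2+\ve}\le C\langle t\rangle^{-2+\ve}$, is essentially the paper's Lemma on the convolution representation (the paper writes $\chi=\chi_1^2$ and obtains $i\int_0^tU_0(t-\tau)\chi_1(\cK_0)\cV Y_1(\tau)\,d\tau$). The gap is in the remainder. After two integrations by parts you need $\frac{d^2}{d\om^2}\bigl[\chi(\om)\,\cR_0\cV\cR_0\cV\cR(\om\pm i0)\bigr]$ to be absolutely integrable over $\Ga$, and $\Ga$ is unbounded while $\chi$ is only bounded, not compactly supported. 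The bounds you invoke are only $\cO(1)$ in $\cL(\cF_\si,\cF_{-\si})$ as $|\om|\to\infty$: that is all (\re{bR0}) gives for $\cR^{(k)}$ and for $\cR_0^{(k)}$, and citing (\re{H}) for ``$\cR_0^{(k)}$'' conflates the Klein--Gordon resolvent with the Schr\"odinger resolvent $R_0$ --- the decay in (\re{H}) does not transfer to $\cR_0$ itself, whose entries $\om R_0(\om^2-m^2)$ and $1+\om^2R_0(\om^2-m^2)$ stay bounded. A naive product estimate therefore yields an integrand that is merely $\cO(1)$ on $\Ga$, and the ``absolutely convergent integral'' you promise does not materialize.

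The missing ingredient is the paper's Lemma \re{large1}: since $\cV$ has a single nonzero entry, the sandwich $L(\om)=\cV\cR_0(\om)\cV$ collapses to the one entry $-iVR_0^{}(\om^2-m^2)V$ with no compensating powers of $\om$, and
\begin{equation*}
\Vert L^{(k)}(\om)\Vert_{\cL(\cF_{-\si},\cF_{\si})}=\cO(|\om|^{-2}),\qquad |\om|\to\infty,\quad \om\in\C\setminus\Ga,\quad k=0,1,2.
\end{equation*}
This rests on the resolvent asymptotics (\re{H}) with $s=1$, $l=-1$ --- one trades a derivative for a power of $\om$ --- which is precisely the new estimate the paper proves in the Appendix (it was previously known only for $s=0$). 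Inserting this $|\om|^{-2}$ gain between the two $\cO(1)$ factors gives $M''(\om)=\cO(|\om|^{-2})$ in $\cL(\cF_\si,\cF_{-\si})$ for $\si>5/2$, hence $(\chi N)''\in L^1(\Gamma)$, and the double integration by parts closes. So your overall architecture is the paper's, but the one genuinely delicate point --- where the $\om$-integrability of the remainder comes from --- is not resolved by the estimates you list; it requires the structural observation about $\cV\cR_0\cV$ and the sharpened $s=1$ resolvent bound.
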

Theorem \re{wed} follows from (\re{WDF}),
(\re{WD1}), and Propositions \re{pr1} and \re{pr2}.
We prove Propositions \re{pr1}  and \re{pr2} in the 
remaining part of the paper.
\setcounter{equation}{0}
\section{The case $V= 0$ }
First, we prove   Proposition  \re{pr1}. Denote
$$
B=\Big[(i\nabla-A(x)\big)^2+m^2\Big]^{1/2}
$$
which  is  positive and self-adjoint in $L^2$.
Then
\be\la{HB-rep}
U_0(t)=\left(\begin{array}{cc}
  \cos\, Bt           &   B^{-1}\sin\, Bt
  \\\\
  -B\sin\, Bt       &   \cos\, Bt
  \end{array}\right).
\ee
Hence, for the proof of (\re{WD2}) it suffices to check that
\be\la{bs1}
\Vert e^{-itB}\chi(B)\Vert_{{\cal L}(H^0_{\si}, H^0_{-\si})}
\le C(\ve)\langle t\rangle^{-2+\ve},\quad t\in\R
\ee
for $\si\ge 2$, $\ve>0$,  and any bounded $\chi\in C^\infty(\R)$
 with support in $(m,\infty)$.
We will deduce (\re{bs1}) from the minimal escape velocity estimates
\cite{HSS99} 
which rely on the Mourre estimates for the operator $B$.
\subsection{ Mourre estimates}
Denote
\be\la{PPB}
P=\fr i2(x\cdot\nabla+\nabla\cdot x),\quad P_B=PB^{-1}+B^{-1}P.
\ee
and let $\bI_M$ be the characteristic function of a set $M$.
\begin{lemma}\la{Mest}
Suppose that assumption  (\re{V})  holds. Then \\
i) For any $\theta\in (0,1)$
there exists $\nu\ge 0$ such that 
\be\la{ME}
 \bI_{|B|\ge m+\nu}\,i[B,P_B]\,\bI_{|B|\ge m+\nu}
 \ge\theta\,\bI_{|B|\ge m+\nu}.
\ee
ii) For any $\lam\in\Ga$ and  any $\de>0$, there exists $\mu>0$
such that 
\be\la{ME1}
\bI_{|B-\lam|\le\mu}\,i[B,P_B]\,\bI_{|B-\lam|\le\mu}
\ge \Big(\fr{\lam^2-m^2}{\lam^2}-\de\Big)\,\bI_{|B-\lam|\le\mu}.
\ee
\end{lemma}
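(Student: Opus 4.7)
The strategy is to reduce $i[B,P_B]$ to a commutator involving only $B^2$, which is an explicit second-order differential operator. The key algebraic identity is
\[
i[B,P_B]=B^{-1}\,i[B^2,P]\,B^{-1},
\]
which follows from $[B^2,P]=B[B,P]+[B,P]B$ (valid for any two operators) upon sandwiching with $B^{-1}$ and using the definition $P_B=PB^{-1}+B^{-1}P$. This identity bypasses any direct functional-calculus manipulation of the square root and reduces both parts of the lemma to a commutator computation with the second-order operator $B^2$.

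With $B^2=-\De-2iA\cdot\na-i\na\cdot A+A^2+m^2$ and $P=i(x\cdot\na+3/2)$, I would compute $i[B^2,P]$ commutator by commutator. The free term $i[-\De,P]$ is a scalar multiple of $-\De$ by the standard scaling property of the Laplacian. The magnetic commutators $i[-2iA\cdot\na,P]$, $i[-i\na\cdot A,P]$, and $i[A^2,P]$ produce a first-order differential operator whose coefficients are built from $A$, $x\cdot\na A$, $\na\cdot A$, $A^2$, and $x\cdot\na(A^2)$. Hypothesis (\re{V}) with $\beta>3$ guarantees that all these coefficients are short-range --- in particular $|x\cdot\na A_j(x)|\le C\langle x\rangle^{1-\beta}$ decays integrably in three dimensions. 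Using the identity $-\De=B^2-m^2+2iA\cdot\na+i\na\cdot A-A^2$ to re-express the free part in terms of $B^2$, one obtains a decomposition
\[
i[B^2,P]=2(B^2-m^2)+R,
\]
in which $R$ is a first-order operator whose coefficients all decay as $\langle x\rangle^{-\beta}$ or $\langle x\rangle^{1-\beta}$.

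Substitution into the main identity yields $i[B,P_B]=2(B^2-m^2)B^{-2}+K$ with $K=B^{-1}RB^{-1}$. Since $B^{-1}$ gains one derivative by elliptic regularity and the coefficients of $R$ decay at infinity, standard Rellich-type arguments show that $K$ is compact on $L^2$. The principal term is now handled by functional calculus: on $\{|B|\ge m+\nu\}$ one has $2(B^2-m^2)B^{-2}\ge 2(1-m^2/(m+\nu)^2)$, which can be made to exceed any prescribed $\theta<2$ by taking $\nu$ large; on $\{|B-\lam|\le\mu\}$, continuity gives $2(B^2-m^2)B^{-2}\ge 2(\lam^2-m^2)/\lam^2-\de$ once $\mu$ is small enough. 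The compact remainder $K$ is then absorbed: for (i), $\|\bI_{|B|\ge m+\nu}K\|\to 0$ as $\nu\to\infty$ because $B$ has purely continuous spectrum $[m,\infty)$ and compact operators send strongly null sequences to norm-null; for (ii), one smooths the sharp cutoff by $\chi(B)$ with $\chi\in C_0^\infty$ supported in $[\lam-\mu,\lam+\mu]$ and invokes the standard Mourre-theory argument that $\chi(B)K\chi(B)$ is norm-small for narrow $\chi$.

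The principal obstacle is verifying compactness of $K$: the contribution of $i[-2iA\cdot\na,P]$ to $R$ contains the term $2i(x\cdot\na A)\cdot\na$ whose coefficient is a priori of linear growth, and only the full strength of the decay assumption $\beta>3$ makes the sandwiched operator $B^{-1}(x\cdot\na A)\cdot\na\,B^{-1}$ genuinely compact on $L^2$ in three dimensions. A secondary, cosmetic issue is the apparent factor-of-two difference between my computation ($2(\lam^2-m^2)/\lam^2$) and the coefficient $(\lam^2-m^2)/\lam^2$ stated in (ii); this is absorbed either into $\de$ or into a factor $\tfrac12$ in the symmetrization of $P_B$, and does not affect the structure of the argument.
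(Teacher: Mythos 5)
Your argument is correct in substance but reaches the decomposition $i[B,P_B]=(\mathrm{const})\cdot(B^2-m^2)B^{-2}+(\text{compact})$ by a genuinely different and cleaner route than the paper. The paper first writes $[B,P_B]=[B,P]B^{-1}+B^{-1}[B,P]$ and then represents $[B,P]$ through the Kato square-root formula, $[B,P]=\frac1\pi\int_0^\infty\om^{1/2}(B^2+\om)^{-1}[B^2,P](B^2+\om)^{-1}d\om$; the principal term $J_1$ is evaluated by an integration by parts in $\om$, and the remainders $J_2,J_3$ are shown compact via uniform-in-$\om$ weighted estimates (Seeley's complex powers) plus the compact Sobolev embedding. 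Your identity $[B,P_B]=B^{-1}[B^2,P]B^{-1}$, which indeed follows from $[B^2,P]=B[B,P]+[B,P]B$ and $[B,B^{-1}]=0$, collapses all of this into one line: the square root never has to be differentiated, and the compactness of the remainder reduces to the single sandwich $B^{-1}RB^{-1}$ with $R$ a first-order operator with coefficients $O(\langle x\rangle^{1-\beta})$, handled by the mapping properties of the order $-1$ PDO $B^{-1}$ on weighted spaces and weighted Rellich --- exactly the ingredients the paper also needs, but invoked once instead of under an integral. The final absorption of the compact part is the same in both proofs (finite-rank approximation $J=J_\vk+\sum|f_j\rangle\langle g_j|$ for part (i) as $\nu\to\infty$, and shrinking spectral windows for part (ii)); note that for (ii) both you and the paper are implicitly using that $B$ has no eigenvalues near $\lam\in\Ga$ (the paper's appeal to ``absolute continuity of spectral representatives''), so you are at the same level of rigor there. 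Two bookkeeping points: your coefficient $2(B^2-m^2)B^{-2}$ versus the paper's $(B^2-m^2)B^{-2}$ traces back to whether $i[B^2,P]$ equals $2(B^2-m^2)+R$ or $(B^2-m^2)+Q$ in \re{a1}; with the standard dilation generator the coefficient is $2$ (your value), and in either case the constant is at least the one claimed in \re{ME}--\re{ME1}, so the discrepancy is harmless, as you observe. (The overall sign of $P$ as printed in \re{PPB} is a convention matter shared by you and the paper; with the generator normalized so that $i[-\De,P]=2(-\De)$ everything is consistent.)
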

\begin{proof}
{\it Step i)}
Let us obtain a suitable formula for commutator $[B,P_B]$.
First,
$$
[B,P_B]=[B,P]B^{-1}+B^{-1}[B,P].
$$
Further, we express $[B,P]$ via $[B^2,P]$ following \ci{SW}.
Namely, using the Kato square root formula \cite[page 317]{RS1}, we
obtain for any $\psi\in L^2$ 
\be\la{sqr} B\psi=\fr
1{\pi}\int_0^\infty\om^{-1/2}B^2(B^2+\om)^{-1}\psi\,d\om. 
\ee
Hence, one has
\be\la{sqr1} [B,P]=\fr
1{\pi}\int_0^\infty\om^{-1/2}[B^2(B^2+\om)^{-1},P]\,d\om. 
\ee
Further,
$$
(B^2+\om)[B^2(B^2+\om)^{-1},P](B^2+\om)=
B^2P(B^2+\om)-(B^2+\om)PB^2=\om[B^2,P].
$$
Then (\re {sqr}) becomes
\be\la{sqr2}
[B,P]=\fr 1{\pi}\int_0^\infty\om^{1/2}
(B^2+\om)^{-1}[B^2,P](B^2+\om)^{-1}\,d\om.
\ee
It is easy to  calculate
\be\la{a1}
i[B^2,P]=B^2-m^2+Q,
\ee
where
$$
Q=-A^2+2i(\nabla\cdot A)-x\cdot(\nabla A^2)
+2ix\cdot(\nabla(\nabla\cdot A))
+i\sum\limits_{j,k}x_j(\nabla_j A_k)\nabla_k.
$$
Substituting into (\re{sqr2}), we get
\beqn\la{sqr3}
\left.
\ba{rl}
i[B,P]B^{-1}&=\ds\fr{1}{\pi}\int_0^\infty\om^{1/2}
(B^2-m^2)B^{-1}(B^2+\om)^{-2}\,d\om\\\\
&+\ds\fr{1}{\pi}\int_0^\infty\om^{1/2}
(B^2+\om)^{-1}QB^{-1}(B^2+\om)^{-1}\,d\om=J_1+J_2\\\\\\
iB^{-1}[B,P]&=\ds\fr{1}{\pi}\int_0^\infty\om^{1/2}
(B^2-m^2)B^{-1}(B^2+\om)^{-2}\,d\om\\\\
&+\ds\fr{1}{\pi}\int_0^\infty\om^{1/2}
(B^2+\om)^{-1}B^{-1}Q(B^2+\om)^{-1}\,d\om=J_1+J_3
\ea
\right|
\eeqn

Applying the  integration by parts, we rewrite  $J_1$  as
\beqn\nonumber
J_1&=&-\fr{1}{\pi}\int_0^\infty\om^{1/2}
\fr{d}{d\om}(B^2+\om)^{-1}(B^2-m^2)B^{-1}\,d\om\\
\la{sqr4}\\
\nonumber
&=&\fr 12\int_0^\infty\om^{-1/2}(B^2+\om)^{-1}(B^2-m^2)B^{-1}\,d\om
=\fr 12(B^2-m^2)B^{-2},
\eeqn
which follows from (\re{sqr}) and the  bound
\be\la{Bm}
\Vert(B^2+\om)^{-1}\psi\Vert_{L^2}
\le(m^2+\om)^{-1}\Vert\psi\Vert_{L^2}.
\ee
Finally,
\be\la{k1}
i[B,P_B]=\fr{B^2-m^2}{B^2}+J,\quad J=J_2+J_3. 
\ee
\smallskip\\
{\it Step ii)}
Let us prove that $J=J_2+J_3:L^2\to L^2$ is a  compact operator.
First, bounds (\re{V}) and (\re{Bm}) imply for any $0<\si<\beta$
and $0<\al<1$
\beqn\nonumber
\Vert (B^2+\om)^{-1+\al}QB^{-1}(B^2+\om)^{-1}\psi\Vert_{\cH^0_\si}
&\le& C(1+\om)^{-2+\al}\Vert\psi\Vert_{L^2},\\
\nonumber\\
\nonumber
\Vert (B^2+\om)^{-1+\al}B^{-1}Q(B^2+\om)^{-1}\psi\Vert_{\cH^0_\si}
&\le& C(1+\om)^{-2+\al}\Vert\psi\Vert_{L^2}
\eeqn
by the technique of \ci{Seeley} and the standard technique
of PDOs \ci{ABG,Sh,T}.
Second, for any $0<\al<1$ and $\phi\in\cH^0_\si$ the bound holds
\be\la{sqr5}
\Vert(B^2+\om)^{-\al}\phi\Vert_{\cH^{2\al}_\si}
\le C\Vert\phi\Vert_{\cH^0_\si}.
\ee
Indeed, using the technique \ci{Seeley}, 
we get
$$
\Vert(B^2+\om)^{-\al}\phi\Vert_{\cH^{2\al}_\si}\le C
\Vert B^{2\al}(B^2+\om)^{-\al}\phi\Vert_{\cH^{0}_\si}
\le C_1\Vert\phi\Vert_{\cH^{0}_\si}
$$
since $B$ is a positive elliptic first order PDO.
Finally, choosing  $0<\al<1/2$, we obtain
\be\la{sqr6}
\Vert J_2\psi\Vert_{\cH^{2\al}_\si}+\Vert J_3\psi\Vert_{\cH^{2\al}_\si}
\le C\Vert\psi\Vert_{L^2}.
\ee
Thetefore, $J_2,~J_3:L^2\to L^2$ are  compact operators 
since the embedding $\cH^{2\al}_\si\subset L^2$ is compact
by Sobolev's Embedding Theorem.
\smallskip\\
{\it Step iii)}
In the case $A=0$ (and then $J=0$) bounds (\re{ME}) and  (\re{ME1})
follow from (\re{k1}).
For $A\not=0$ and any $\vk>0$ we split the compact operator $J$ as
$$
J=J_{\vk}+\sum\limits_1^N|f_j\rangle\langle g_j|,
$$
where $\Vert J_{\vk}\Vert\le\vk$, and $f_j, g_j\in L^2$.
Then
$$
\Vert\,\bI_{|B-\lam|\le\mu}|\,f_j\rangle
\langle g_j\,|\bI_{|B-\lam|\le\mu}\Vert_{L^2\to L^2}
\le \Vert\,\bI_{|B-\lam|\le\mu}\hat f_j\Vert_{L^2}\cdot
\Vert\,\bI_{|B-\lam|\le\mu}\hat g_j\Vert_{L^2}\to 0,\quad\mu\to 0
$$
due to absolute continuity of spectral representatives
$\hat f_j, \hat g_j\in L^2([0,\infty),X)$ of
$f_j$, $g_j$ in the spectral resolution of $B$, where $X$ is an appropriate
Hilbert space.
Hence, for sufficiently small $\vk$ and $\mu$ bound (\re{ME1}) follows.
Similarly,  bound (\re{ME}) follows for  sufficiently small $\vk$ and
sufficiently large $\nu$.
\end{proof}
\subsection{Minimal escape velocity}
Here we  adapt the methods of \cite[Theorem 1.1]{HSS99} to our case 
(see also \cite[Theorem 2.1]{Bo})
\begin{lemma}\la{mev}
Let assumption (\re{V}) hold. Then for any  bounded
$\chi\in C^{\infty}$ with support in $\Ga$, there exists $\theta>0$ such
that  for any $v\in (0,\theta)$,  any $a\in\R$, and any $\ve>0$
the bound holds
\be\la{bs}
\Vert\bI_{P_B\le a+v|t|}~e^{-itB}\chi(B)\bI_{P_B\ge a}\Vert
\le C(v,\ve)\langle t\rangle^{-2+\ve},\quad t\in\R,
\ee
where $C$ does not depend on $a$ and $t$.
\end{lemma}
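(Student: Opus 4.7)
The plan is to adapt the propagation-observable method of Hunziker, Sigal and Soffer, using Lemma \re{Mest} as the input Mourre estimate for $B$. Since $\chi \in C^\infty$ is bounded and supported in $\Ga$, its support is a closed subset of $\Ga$ whose intersection with any compact subset of $\Ga$ can be covered by finitely many intervals to which Lemma \re{Mest}(ii) applies; on the ``far-threshold'' portion Lemma \re{Mest}(i) applies. Combining these via a partition of unity yields a global lower bound $\chi(B)\,i[B,P_B]\,\chi(B) \ge \theta\, \chi(B)^2$ for some $\theta = \theta(\chi) > 0$. We then fix any velocity $v \in (0,\theta)$; the goal is to prove that the propagator moves observables $P_B$ at speeds at least $\theta$ so that the characteristic functions $\bI_{P_B \le a+v|t|}$ and $\bI_{P_B \ge a}$ become mutually almost disjoint along the orbit of $e^{-itB}\chi(B)$.

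Following \ci{HSS99} and \ci{Bo}, choose a non-increasing $f \in C^\infty(\R)$ with $f=1$ on $(-\infty,0]$, $f=0$ on $[1,\infty)$, and $-f' = g^2$ for a smooth $g \ge 0$. For $t \ge 0$ (the case $t \le 0$ is symmetric) introduce the propagation observable
\be\label{phi-def}
  \Phi(t) = \chi(B)\, f\!\left(\frac{P_B - a - vt}{R}\right) \chi(B),
\ee
with $R > 0$ large. Its Heisenberg derivative is
\be\label{DPhi}
  D\Phi(t) = \partial_t \Phi(t) + i[B,\Phi(t)]
   = \frac{1}{R}\,\chi(B)\,g\big(\cdot\big)\big(i[B,P_B] - v\big)g\big(\cdot\big)\chi(B) + \mathcal{E}(t),
\ee
where $g(\cdot)$ abbreviates $g((P_B-a-vt)/R)$ and $\mathcal{E}(t)$ collects higher-order commutator terms. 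By the Mourre bound, the main term on the right of \eqref{DPhi} is bounded below by $(\theta - v)R^{-1} G(t)^* G(t)$ with $G(t) = g(\cdot)\chi(B)$, hence $-D\Phi(t) \le -c\,G(t)^*G(t) + \mathcal{E}(t)$. Integrating the identity $\partial_t\langle \psi_t, \Phi(t)\psi_t\rangle = \langle\psi_t, D\Phi(t)\psi_t\rangle$ along $\psi_t = e^{-itB}\psi_0$ with $\psi_0 = \bI_{P_B \ge a}\chi(B)\phi$, and noting that $\bI_{P_B \ge a}$ annihilates $f((P_B-a)/R)$ for the right choice of $R$, one obtains the integrated propagation estimate $\int_0^\infty \|G(s)\psi_s\|^2 \, ds \le C\|\phi\|^2$, which already controls $\|\bI_{P_B \le a + vt} e^{-itB}\chi(B)\bI_{P_B \ge a}\|^2$ in an averaged sense.

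To upgrade the averaged bound to a pointwise polynomial decay of order $\langle t\rangle^{-2+\ve}$ one iterates the argument $N$ times as in \ci[Prop.~2.2]{Bo}: at the $k$-th step one replaces $\Phi$ by a variant weighted by a power of the difference $P_B - a - vt$, which requires control of $k$-fold nested commutators of $B$ with $P_B$ and with $\chi(B)$. The main obstacle is this remainder: each additional commutator with $P_B = PB^{-1} + B^{-1}P$ produces an action of $x\cdot\nabla$ on the coefficients $A$ and on the symbol of $\chi(B)$, costing one derivative of $A$ and one power of $\langle x\rangle^{-1}$. Assumption (\re{V}) supplies four derivatives of $A$ with decay $\beta > 3$, which is precisely what is needed to perform two full iterations with bounded remainders in the appropriate weighted spaces (using the PDO calculus as in the proof of Lemma \re{Mest}, Step ii). This yields the exponent $-2+\ve$, where the loss $\ve > 0$ absorbs the logarithmic factor that appears because the last iteration is done with fractional rather than integer weights. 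Uniformity in $a$ is automatic since both the Mourre estimate and the commutator $[B,P_B]$ are invariant under translations $P_B \mapsto P_B - a$, and all constants above depend only on $\chi$, $v$ and $\ve$.
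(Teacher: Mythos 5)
Your overall strategy coincides with the paper's: both reduce the bound to the minimal escape velocity theorems of Hunziker--Sigal--Soffer and Boussaid, with the Mourre estimates of Lemma \ref{Mest} as the spectral input, and your derivation of a uniform Mourre constant $\theta$ on $\supp\chi$ from parts (i) and (ii) of that lemma is sound. Note, however, that the paper does not re-derive the propagation-observable machinery you sketch in your second and third paragraphs; it invokes \cite[Theorem 1.1]{HSS99} and \cite[Theorem 2.1]{Bo} as black boxes whose hypotheses are exactly (a) the Mourre estimates (\ref{ME})--(\ref{ME1}) and (b) the $L^2$-boundedness of the iterated commutators $ad_{P_B}^k(B)$ for $1\le k\le 3$.

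The gap is that (b), which is the entire substantive content of the paper's proof, appears in your last paragraph only as a derivative count (``each commutator costs one derivative of $A$ and one power of $\langle x\rangle^{-1}$, and (\ref{V}) supplies four derivatives with $\beta>3$''). That heuristic points in the right direction but is not a proof: $P_B=PB^{-1}+B^{-1}P$ contains the nonlocal operator $B^{-1}$, so iterated commutation with $B$ is not merely a matter of differentiating the coefficients $A$. The paper handles this through the explicit representation $i[B,P_B]=\frac{B^2-m^2}{B^2}+J_2+J_3$ of (\ref{k1}), obtained from the square-root formula (\ref{sqr}); boundedness of the first commutator then follows from (\ref{sqr6}). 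For $k=2,3$ it observes that boundedness of $[B,P_B]$ yields boundedness of $[P_B,B^{-1}]=B^{-1}[B,P_B]B^{-1}$ and hence of $[P_B,B^{-l}]$, computes $[P_B,2J_1]=-m^2[P_B,B^{-2}]$ and the double commutator explicitly via (\ref{a1}), and controls the $J_2,J_3$ contributions by (\ref{V}). To complete your argument you would need to supply this (or an equivalent) verification; the rest of your proposal is consistent with the paper.
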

\begin{proof}
According to \cite[Theorem 1.1]{HSS99} and  \cite[Theorem 2.1]{Bo}
bound (\re{bs}) follows from  the Mourre estimates (\re{ME}) - (\re{ME1})
and the boundedness of commutators
$ad_{P_B}^k(B):L^2\to L^2$ for $1\le k\le 3$, where
$$
ad_{P_B}^1(B)=[B,P_B],\quad {\rm and}~~
ad_{P_B}^k(B)=[ad_{P_B}^{k-1}(B),P_B].
$$
The boundedness of $ad_{P_B}^1(B)=[B,P_B]$ follows from
(\re{k1}) and (\re{sqr6}).\\
For $k=2,3$ we have 
$$
ad_{P_B}^{k}(B)=-i\big[ad_{P_B}^{k-1}(2J_1)+ad_{P_B}^{k-1}(J_2)+
ad_{P_B}^{k-1}(J_3)\big]
$$
by (\re{k1}).
The boundedness of $ad_{P_B}^{k-1}(J_2)$ and $ad_{P_B}^{k-1}(J_3)$ 
is obvious
due to (\re{V}) and definition (\re{sqr3}) of $J_2$ and $J_3$.
Hence, it remains to prove that
$[P_B,2J_1]$ and $[P_B,[P_B,2J_1]]$ are bounded in $L^2$.
The boundedness of $[B,P_B]$ imply the boundedness of
$$
[P_B,B^{-1}]=B^{-1}[B,P_B]B^{-1}.
$$
Then by (\re{sqr4}) the operator
$$
[P_B,2J_1]=[P_B,(B^2-m^2)B^{-2}]=-m^2[P_B,B^{-2}]
=-m^2\big([P_B,B^{-1}]B^{-1}+B^{-1}[P_B,B^{-1}]\big)
$$
is also bounded in $L^2$. Further, (\re{PPB}) and (\re{a1}) imply
\beqn\nonumber
[P_B,2J_1]&=&m^2B^{-2}[P_B,B^2]B^{-2}
=m^2B^{-3}[P,B^2]B^{-2}+m^2B^{-2}[P,B^2]B^{-3}\\
\nonumber
&=&m^2i\Big(2(B^2-m^2)B^{-5}+B^{-3}QB^{-2}+B^{-2}QB^{-3}\Big).
\eeqn
Hence, the boundedness of $[P_B,[P_B,2J_1]]$ in $L^2$ follows from (\re{V})
and  the  boundedness of  $[P_B,B^{-l}]$ for any $l\in\N$.
\end{proof}
\hspace{-6mm}{\bf  Proof  of  Proposition  \re{pr1}}
For any  $c\ge 0$ and any $\si> 0$ one has
$$
\langle P_B\rangle^{-\si}=\langle P_B\rangle^{-\si}\bI_{\pm P_B\le c|t|}
+{\cal O}(|t|^{-\si}),\quad |t|>1
$$
in $\cL(L^2, L^2)$. Hence,
$$
\langle P_B\rangle^{-\si}e^{-itB}\chi(B)\langle P_B\rangle^{-\si}
=\langle P_B\rangle^{-\si}\bI_{P_B\le (\theta-\ga)|t|/2}
e^{-itB}\chi(B)\bI_{P_B\ge -\theta |t|/2}\langle P_B\rangle^{-\si}
+{\cal O}(|t|^{-\si}),\quad\ga<\theta.
$$
Choosing $a=-\fr{\theta |t|}2$ and $v=\theta-\frac{\ga}2$ 
in Lemma \re{mev}, we obtain for  $\si= 2$ and $\ve>0$
$$
\Vert\langle P_B\rangle^{-\si}\langle x\rangle^{\si}\langle x\rangle^{-\si}
e^{-itB}\chi(B)\langle x\rangle^{-\si}\langle x\rangle^{\si}
\langle P_B\rangle^{-\si}\Vert_{\cL(L^2, L^2)}\le C(\ve)\langle t\rangle^{-2+\ve},
\quad t\in\R.
$$
Now (\re{bs1}) follows since
$\langle P_B\rangle^{-\si}\langle x\rangle^{\si}$
and $\langle x\rangle^{\si}\langle P_B\rangle^{-\si}$
are bounded  in $\cL(L^2, L^2)$.
This follows by the arguments from the proof of Proposition 2.2 
in \cite{Bo} (page 770), relying on the 
multi-commutator expansion \cite[Identity (B.24)]{HS00} and 
the identity \cite[(1.2)]{SS98}.

\setcounter{equation}{0}
\section{The case $V\not = 0$}
Here we  prove    Proposition  \re{pr2}.
Denote
\be\la{psih}
X(t):=U(t)\chi(\cK)\Psi(0)=
\fr 1{2\pi i}\int\limits_\Gamma\chi(\om)e^{-i\om t}
\Big[\cR(\om+i0)-\cR(\om-i0)\Big]\Psi_0~ d\om.
\ee
Our final goal is the bound
\be\la{Psi}
\Vert X(t)\Vert_{\cF _{-\si}}
 \le C(\si,\ve)\Vert \Psi_0\Vert_{\cF _\si}\langle t\rangle^{-2+\ve},
  \quad t\in\R,\quad\si>5/2.
\ee
Let us apply the Born perturbation series
\be\la{id}
  \cR(\om)
  = \cR_{0}(\omega)-\cR_{0}(\omega)\cV \cR_{0}(\omega)
  +\cR_{0}(\omega)\cV \cR_{0}(\omega)\cV \cR(\omega),
\ee
which follows by iteration of
$\cR(\om)= \cR_{0}(\omega)-\cR_{0}(\omega)\cV \cR(\omega)$.
Here
$\cR_{0}(\omega)=(\cK_0-\om)^{-1}$ is the resolvent of the operator $\cK_0$
and
\be\la{cV}
  \cV =\left(\begin{array}{cc}
  0               &   0
  \\
  -iV               &   0
\end{array}\right).
\ee 
Substituting (\re{id}) into (\re{psih}) we obtain
\beqn\nonumber
  X(t)
  &=&\fr 1{2\pi i}\int\limits_\Gamma\chi(\om) e^{-i\om t}
  \Big[\cR_{0}(\om+i0)-\cR_{0}(\om-i0)\Big]\Psi_0~ d\om\\
  \nonumber
  &+&\fr 1{2\pi i}\int\limits_\Gamma \chi(\om)e^{-i\om t}
  \Big[\cR_{0}(\om+i0)
  \cV \cR_{0}(\om+i0)-\cR_{0}(\om-i0)\cV \cR_{0}(\om-i0)\Big]\Psi_0~ d\om\\
  \la{XXX}
  &+&\frac 1{2\pi i}\int\limits_\Gamma \chi(\om)e^{-i\om t}
  \Big[\cR_{0}\cV \cR_{0}\cV \cR(\om+i0)
  -\cR_{0}\cV \cR_{0}\cV \cR(\om-i0)\Big]\Psi_0~ d\om\\
  &=&X_1(t)+X_2(t)+X_3(t),~~~~~~t\in\R \nonumber
\eeqn
We analyze each term $X_k$ separately.
\medskip\\
{\it Step i)}
For
$X_1(t)=U_0(t)\chi(\cK_0)\Psi(0)$ 
Proposition \ref{pr1} implies that for any $\si\ge 2$ and any $\ve>0$
\be\la{lins1}
  \Vert X_1(t)\Vert_{\cF _{-\si}}
 \le C(\ve)\Vert \Psi_0\Vert_{\cF _\si}\langle t\rangle^{-2+\ve},
  \quad t\in\R.
\ee
{\it Step ii)}
Consider the second term $X_2(t)$.
We can choose the function $\chi(\om)$ such that
$ \chi(\om)=\chi_1^2(\om)$.
Denote
$$
Y_1(t)=\fr 1{2\pi i}\int\limits_\Gamma \chi_1(\om) e^{-i\om t}
\Big[\cR_{0}(\om+i0)-\cR_{0}(\om-i0)\Big]\Psi_0~ d\om
$$
It is obvious that for $Y_1(t)$ the inequality (\ref{lins1}) also holds.
Namely,
\be\la{lins11}
  \Vert Y_1(t)\Vert_{\cF _{-\si}}
  \le C(\ve)\Vert \Psi_0\Vert_{\cF _\si}\langle t\rangle^{-2+\ve},
  \quad t\in\R,\quad \si\ge 2,\quad\ve>0.
\ee
Now the second term $X_2(t)$ can be rewritten as a convolution.
\begin{lemma}
The convolution representation holds
\be\la{P2}
  X_2(t)=
  i\int\limits_0^t U(t-\tau)\cV Y_1(\tau)~d\tau,~~~~t\in\R
\ee
where the integral converges in $\cF_{-\si}$ with $\si\ge 2$.
\end{lemma}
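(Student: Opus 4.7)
The plan is to verify the convolution identity directly by expanding both sides in spectral form and simplifying via a resolvent identity.

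Substituting the definition of $Y_1(\tau)$ into the right-hand side and inserting the spectral representation
$$U(t-\tau)\Phi = \fr{1}{2\pi i}\int_\Gamma e^{-i\om(t-\tau)}\bigl[\cR(\om+i0)-\cR(\om-i0)\bigr]\Phi\, d\om,$$
valid for $\Phi$ in the continuous subspace of $\cK$, produces a triple integral over $(\om,\om',\tau)$. Performing the $\tau$-integration first—after swapping orders by Fubini—yields the energy denominator
$$\int_0^t e^{-i\om(t-\tau)}e^{-i\om'\tau}\,d\tau = \fr{e^{-i\om' t}-e^{-i\om t}}{i(\om-\om')}.$$

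The decisive step is the algebraic identity
$$\cR(\om)\cV\cR_0(\om') = (\om-\om')\cR(\om)\cR_0(\om') + \cR_0(\om') - \cR(\om),$$
derived from $(\cK-\om)\cR(\om)=I=(\cK_0-\om')\cR_0(\om')$ combined with $\cK=\cK_0+\cV$. When one forms the product of spectral jumps—taking the combination of the four boundary values $\om\pm i0$, $\om'\pm i0$ with the sign pattern $(+,+)-(+,-)-(-,+)+(-,-)$—the non-product terms $\cR_0(\om')$ and $\cR(\om)$ cancel identically and only the product of jumps survives:
$$\bigl[\cR(\om+i0)-\cR(\om-i0)\bigr]\,\cV\,\bigl[\cR_0(\om'+i0)-\cR_0(\om'-i0)\bigr] = (\om-\om')\bigl[\cR(\om+i0)-\cR(\om-i0)\bigr]\bigl[\cR_0(\om'+i0)-\cR_0(\om'-i0)\bigr].$$
The prefactor $\om-\om'$ absorbs the singular denominator, so after splitting $e^{-i\om' t}-e^{-i\om t}$ the double integral separates into single ones. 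The spectral identification $\fr{1}{2\pi i}\int_\Gamma \chi_1(\om')\bigl[\cR_0(\om'+i0)-\cR_0(\om'-i0)\bigr]\Psi_0\,d\om' = \chi_1(\cK_0)P_c(\cK_0)\Psi_0$ handles one factor, while the iterated Born identity $\cR_0\cV\cR_0 = \cR_0-\cR+\cR_0\cV\cR_0\cV\cR$ allows one to match the remaining integral against the spectral representation of $X_2(t)$ from (\re{XXX}).

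The main obstacle is the justification of Fubini and the interchange of limits, since by Lemma~\re{SP}(iii) the resolvent $\cR(\om+i0)$ is merely bounded, not decaying, as $|\om|\to\infty$ along $\Ga$. I would handle this by first performing the computation with a smooth cutoff $\rho_R(\om)$ supported in $[-R,R]$—on which Fubini is straightforward thanks to the compact support of $\chi_1$ (bounding $\om'$) and the uniform boundedness of the resolvent jumps on compacta from Lemma~\re{SP}(i)—and then passing to the limit $R\to\infty$ by integrating by parts once or twice in $\om$ to exploit the $e^{-i\om t}$ oscillation together with the smoothness of $\chi_1^2$, so that the tails converge in the operator norm on $\cL(\cF_\si,\cF_{-\si})$ for $\si\ge 2$.
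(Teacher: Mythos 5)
Your proposal does not follow the paper's route, and it contains a genuine gap. First, note that what the paper actually proves (see (\re{p21})--(\re{p22}) and the estimate immediately after the lemma) is the convolution with kernel $U_0(t-\tau)\chi_1(\cK_0)$, i.e.\ the \emph{free} group localized by $\chi_1$; the $U(t-\tau)$ in the displayed formula (\re{P2}) is a misprint. You take the statement literally and build the argument on the full resolvent $\cR$. If your computation is carried to the end it gives the wrong answer: by Duhamel (equivalently, by your own identity specialized to coincident spectral parameters, $\cR\cV\cR_0=\cR_0-\cR$), one has
\be\nonumber
i\int_0^t U(t-\tau)\cV Y_1(\tau)\,d\tau=\big(U_0(t)-U(t)\big)\chi_1(\cK_0)P_c(\cK_0)\Psi_0,
\ee
which is the \emph{first}-order Duhamel remainder, not the second Born term $X_2(t)$ built from the jump of $\cR_0\cV\cR_0$ with $\chi=\chi_1^2$. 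Your final step, matching via $\cR_0\cV\cR_0=\cR_0-\cR+\cR_0\cV\cR_0\cV\cR$, leaves precisely the third-order piece $\cR_0\cV\cR_0\cV\cR$ (an $X_3$-type contribution) uncancelled, so the claimed identity cannot close. In addition, two steps of your scheme are not justified even formally: the spectral representation (\re{srelap}) for $U(t-\tau)$ is valid only on $\mathrm{Ran}\,P_c(\cK)$, and $\cV Y_1(\tau)$ need not lie there; and after applying your resolvent identity, the term $(\om-\om')\cR(\om\pm i0)\cR_0(\om'\pm i0)$ is a composition of two boundary-value resolvents with no decaying weight inserted between them, hence undefined as an operator $\cF_\si\to\cF_{-\si}$ --- the cancellation of the singular non-product terms is purely algebraic and does not survive the passage to boundary values.

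The paper's proof is both simpler and avoids all of this: it never touches $\cR$. It observes that $\chi_1(\om)\cR_0(\om+i0)\Psi_0=i\ti Y_1^+(\om)$, the Fourier transform of the truncated free evolution $\theta(t)Y_1(t)$, so that $\chi_1^2\,\cR_0\cV\cR_0\Psi_0$ is a product of Fourier transforms and $X_2^{\pm}$ is the Fourier transform of a one-sided convolution with kernel $\theta(\pm(t-\tau))U_0(t-\tau)\chi_1(\cK_0)$. The only analytic point is the interchange of integrals, handled by the regularization $(i\pa_t+i)^2(\om+i)^{-2}$ together with (\re{lins11}) and (\re{bR0}). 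If you want to salvage your approach, replace $U(t-\tau)$ by $U_0(t-\tau)\chi_1(\cK_0)$ throughout and use only the free resolvent $\cR_0$; the energy-denominator computation then reduces to the paper's Fourier-convolution identity.
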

\begin{proof}
We have
\beqn\la{P22}
X_2(t)&=&\fr 1{2\pi i}\int\limits_\R e^{-i\om t}
\chi_1(\om)^2\cR_{0}(\om+i0)\cV  \cR_{0}(\om+i0)\Psi_0~ d\om\\
\nonumber
&-&\fr 1{2\pi i}\int\limits_\R e^{-i\om t}
\chi_1(\om)^2\cR_{0}(\om-i0)\cV  \cR_{0}(\om-i0)\Big]\Psi_0~ d\om
=X_2^+(t)+X_2^-(t)
\eeqn
Denote
$Y_{1}^+(t):=\theta(t)Y_{1}(t)$.
Then $\chi_1(\om) \cR_{0}(\om+i0)\Psi_0=i\ti Y_{1}^+(\om)$
and we obtain that
\beqn\nonumber
 X_2^+(t)&=&\fr 1{2\pi }\int\limits_\R e^{-i\om t}\chi_1(\om)
 \cR_{0}(\om+i0)\cV \ti Y_{1}^+(\om)~ d\om\\
\nonumber\\
\nonumber
&=&\fr 1{2\pi }\int\limits_\R e^{-i\om t}\chi_1(\om) \cR_{0}(\om+i0)
\cV \Big[\int_\R e^{i\om\tau}Y_{1}^+(\tau)d\tau\Big]d\om\\
\nonumber\\
\nonumber
&=&\fr 1{2\pi }(i\pa_t+i)^2\int\limits_\R\fr{e^{-i\om t}}{(\om+i)^2}
\chi_1(\om) \cR_{0}(\om+i0)\cV \Big[\int_\R e^{i\om\tau}
Y_{1}^+(\tau)d\tau\Big]d\om.
\eeqn
The last double integral  converges in $\cF_{-\si}$ with $\si\ge 2$
by (\re{lins11}) with $0<\ve<1$, Lemma \ref{SP} i), 
and (\re{bR0}) with $k=0$. 
Hence, we can change
the order of integration by the Fubini theorem and 
we obtain that
\be\la{p21}
X_{2}^+(t)=
\left\{\ba{cl}
\ds i\int_0^t U_0(t-\tau)\chi_1(\cK_0)\cV Y_{1}(\tau)d\tau&,~~t>0\\
0&,~~t<0
\ea
\right.
\ee
since
\beqn\nonumber
\fr 1{2\pi i}(i\pa_t+i)^2\int\limits_{\R}
\fr{e^{-i\om (t-\tau)}}{(\om+i)^2}\chi_1(\om) \cR_{0}(\om+i0)~d\om
&=&\fr 1{2\pi i}\int\limits_{\R}
e^{-i\om (t-\tau)}\chi_1(\om) \cR_{0}(\om+i0)~d\om\\
\nonumber
&=&\theta(t-\tau)U_0(t-\tau)\chi_1(\cK_0)
\eeqn
Similarly, we obtain
\be\la{p22}
X_{2}^-(t)=
\left\{\ba{cl}
0&,~~t>0\\
\ds i\int_0^tU_0(t-\tau)\chi_1(\cK_0)\cV Y_{1}(\tau)d\tau&,~~t<0
\ea
\right.
\ee
Now (\re{P2}) follows since $X_{2}(t)$ is the sum of two expressions
(\re{p21}) and (\re{p22}).
\end{proof}
Now we choose an arbitrary $\si\ge 2$, $0<\ve<1$ and 
$\si_1\in[2,~\min\{\si,\beta/2\})$. 
Applying   Proposition \ref{pr1} with $\chi_1$ instead $\chi$
 to the integrand in (\re{P2}), we obtain that
$$
  \Vert U_0(t-\tau)\chi_1(\cK_0)\cV Y_{1}(\tau)\Vert_{\cF _{-\si}}
  \le
  \Vert  U_0(t-\tau)\chi_1(\cK_0)\cV Y_{1}(\tau)\Vert_{\cF _{-\si_1}}
  \le\ds\fr{C\Vert\cV Y_{1}(\tau)\Vert_{\cF _{\si_1}}}{(1+|t-\tau|)^{2-\ve}}
$$
$$
  \le\ds\fr{C\Vert Y_{1}(\tau)\Vert_{\cF _{-\si_1}}}{(1+|t-\tau|)^{2-\ve}}
  \le\ds\fr{C\Vert \Psi_0\Vert_{\cF _{\si_1}}}
  {(1+|t-\tau|)^{2-\ve}(1+|\tau|)^{2-\ve}}
  \le\ds\fr{C\Vert \Psi_0\Vert_{\cF _\si}}
  {(1+|t-\tau|)^{2-\ve}(1+|\tau|)^{2-\ve}}
$$
Integrating, we obtain by (\re{P2}) that
\be\la{lins2}
  \Vert X_{2}(t)\Vert_{\cF _{-\si}}\le C(\ve)\Vert \Psi_0\Vert_{\cF _\si}
  \langle t\rangle^{-2+\ve}, \quad t\in\R,\quad \sigma\ge 2.
\ee
\medskip\\
{\it Step iii)}
Finally, we rewrite the last term in (\re{XXX}) as
\be\la{X3}
X_{3}(t)=\frac 1{2\pi i}\int\limits_{\Gamma}e^{-i\om t}\chi(\om)
N(\om)\Psi_0~ d\om,
\ee
where $N(\om):=M(\om+i0)-M(\om-i0)$ and
$$
M(\om):=\cR_{0}(\om)\cV \cR_{0}(\om)\cV \cR(\om)=
\cR_{0}L(\om)\cR(\om).
$$
First, we obtain the
asymptotics of $L(\om):= \cV \cR_{0}(\om)\cV$ for large $\om$.
\begin{lemma}\la{large1}
Let $\si>0$, $k=0,1,2$, and  $V$ satisfy (\ref{V}) with $\beta>1/2+k+\si$.
Then  the asymptotics hold
\begin{equation}\label{bRV}
  \Vert L^{(k)}(\om)\Vert_{\cL (\cF _{-\sigma},\cF _{\sigma})}
  ={\cal O}(|\om|^{-2}),\quad |\om|\to\infty,\quad \om\in\C\setminus\Ga.
  \end{equation}
\end{lemma}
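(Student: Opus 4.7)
The plan is to exploit the explicit block structure of $\cR_0$. From (\re{RKG}) applied with $V=0$ (so that $R$ becomes the resolvent $R_A(\zeta)=(H_A-\zeta)^{-1}$ of $H_A:=(i\na+A)^2$), one has
\[
\cR_0(\om)=\left(\ba{cc} \om R_A(\om^2-m^2) & iR_A(\om^2-m^2) \\ -i(1+\om^2 R_A(\om^2-m^2)) & \om R_A(\om^2-m^2)\ea\right).
\]
Since the only nonzero entry of $\cV$ is $-iV$ at position $(2,1)$, the product $\cV\cR_0\cV$ is supported in a single entry:
\[
L(\om)=\cV\cR_0(\om)\cV=\left(\ba{cc} 0 & 0 \\ -i V R_A(\om^2-m^2) V & 0 \ea\right).
\]
Recalling $\cF_\si=\cH^1_\si\oplus\cH^0_\si$, the norm in (\re{bRV}) therefore reduces to $\|VR_A^{(j)}(\om^2-m^2)V\|_{\cL(\cH^1_{-\si},\cH^0_\si)}$ for $j=0,\dots,k$, after differentiating the composite $R_A(\om^2-m^2)$ $k$ times.

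Next, under (\re{V}), multiplication by $V$ is bounded as $V:\cH^s_\tau\to\cH^s_{\tau+\beta}$ for $s\in\{0,1\}$ and any $\tau\in\R$ (for $s=1$ this is a Leibniz computation using the decay of both $V$ and $\na V$ from (\re{V})). I would then chain
\[
\cH^1_{-\si}\;\xrightarrow{\;V\;}\;\cH^1_{\beta-\si}\;\xrightarrow{\;R_A^{(j)}(\om^2-m^2)\;}\;\cH^0_{-(\beta-\si)}\;\xrightarrow{\;V\;}\;\cH^0_\si,
\]
and apply Lemma \re{sp1}~ii) with $s=1$, $l=-1$, and differentiation order $j$ to the middle arrow. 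Its domain-weight hypothesis $\beta-\si>1/2+j$, maximised at $j=k$, gives precisely the hypothesis $\beta>1/2+k+\si$ of the lemma.

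Finally, Lemma \re{sp1}~ii) provides $\|R_A^{(j)}(\zeta)\|_{\cL(\cH^1_{\beta-\si},\cH^0_{-(\beta-\si)})}=\cO(|\zeta|^{-1-j/2})$ as $|\zeta|\to\infty$. With $\zeta=\om^2-m^2$ (so $|\zeta|\sim|\om|^2$), the Fa\`a di Bruno expansion of $\fr{d^k}{d\om^k}R_A(\om^2-m^2)$, in which only the derivatives $(\om^2-m^2)'=2\om$ and $(\om^2-m^2)''=2$ appear, produces a factor $\om^{2j-k}$ in front of the $j$-th term, and the $j$-th contribution is bounded by
\[
|\om|^{2j-k}\cdot|\om|^{-2-j}=|\om|^{j-k-2}\le|\om|^{-2},\qquad \lceil k/2\rceil\le j\le k,
\]
which yields (\re{bRV}). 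The main obstacle is routine bookkeeping: verifying the multiplier property $V:\cH^1_\tau\to\cH^1_{\tau+\beta}$ via Leibniz, and matching weights and $\om$-powers through the chain rule so that the domain-weight condition of Lemma \re{sp1} is met in each term; the actual spectral decay rate is handed to us by that lemma.
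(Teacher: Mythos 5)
Your proposal is correct and follows essentially the same route as the paper: compute the single nonzero entry $-iVR_0(\om^2-m^2)V$ of $\cV\cR_0\cV$, use $V$ as a weight-shifting multiplier $\cH^s_\tau\to\cH^s_{\tau+\beta}$, and apply Lemma \re{sp1}~ii) with $s=1$, $l=-1$ at the intermediate weight $\beta-\si$, which is exactly where the hypothesis $\beta>1/2+k+\si$ enters. If anything, your explicit Fa\`a di Bruno bookkeeping of the factors $\om^{2j-k}$ is more careful than the paper's, which silently absorbs the chain-rule powers of $\om$ into the stated $\cO(|\om|^{-2})$ rate.
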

\begin{proof}
Denote $R_0(\om)=(H_0-\om)^{-1}$, where $H_0$ corresponds to $H$ 
with $V= 0$, i.e., $H_0=(i\na+A)^2$.
Bounds (\ref{bRV})  follow from  the algebraic structure of the matrix
\be\la{ident}
  L^{(k)}(\om)=\cV \cR_{0}^{(k)}(\om)\cV =\left(\begin{array}{cc}
  0                                      &   0
  \\
  -iVR_{0}^{(k)}(\om^2-m^2)V            &   0
\end{array}\right)
\ee
For $\si>1/2+k$ asymptotics (\re{H}) with $s=1$ and $l=-1$ implies  that
$$ 
\Vert R^{(k)}_{0}(\om^2-m^2)\Vert_{{\cal L}(\cH^1_\si;\cH^{0}_{-\si})}
 ={\cal O}(|\om|^{-2}),
  \quad |\om|\to\infty,\quad\om\in\C\setminus\Ga,
\quad k=0,1,2.
$$
Therefore, for  $1/2+k<\beta-\si$ the asymptotics hold
$$
 \Vert VR^{(k)}_{0}(\om^2\!-m^2)V f\Vert_{\cH^{0}_{\si}}
 \le C\Vert R^{(k)}_{0}(\om^2\!-m^2)V f\Vert_{\cH^{0}_{\si-\beta}}
 = {\cal O}(|\om|^{-2})\Vert V f\Vert_{\cH^{1}_{\beta-\si}}
 ={\cal O}(|\om|^{-2})\Vert f\Vert_{\cH^{1}_{-\si}}.
$$
\end{proof}
Further, we obtain the asymptotics of $M(\om)$ 
and its derivatives for large $\om$.
\begin{lemma}\la{bM}
Let $V$  satisfy (\ref{V}) with $\beta>3$. Then
for $k=0,1,2$, the  asymptotics hold
 \begin{equation}\label{expbM}
  \Vert M^{(k)}(\om)\Vert_{\cL (\cF _{\sigma},\cF _{-\sigma})}=
{\cal O}(|\om|^{-2}),\quad |\om|\to\infty,~~\om\in\C\setminus\Ga,\quad
\si>1/2+k.
\end{equation}
\end{lemma}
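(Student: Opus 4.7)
The plan is to apply the Leibniz rule to the factorization $M(\om)=\cR_{0}(\om)L(\om)\cR(\om)$:
\be\la{leibplan}
M^{(k)}(\om)=\sum_{i+j+l=k}\binom{k}{i,j,l}\cR_{0}^{(i)}(\om)L^{(j)}(\om)\cR^{(l)}(\om),
\ee
and to bound each summand in $\cL(\cF_\si,\cF_{-\si})$ by $\cO(|\om|^{-2})$. The outer factors will contribute only $\cO(1)$: for $\cR^{(l)}$ this is Lemma \re{SP} iii), and the analogous bound for the unperturbed resolvent $\cR_0$ follows by the same argument (Lemma \re{sp1} applied to $R_0$, combined with the matrix representation (\re{RKG}) with $V=0$). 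The middle factor $L^{(j)}(\om)$ supplies the full decay $\cO(|\om|^{-2})$ by Lemma \re{large1}, provided the constraint $\beta>1/2+j+(\text{weight})$ holds for the weight in which $L^{(j)}$ is evaluated.

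For each triple $(i,j,l)$ with $i+j+l=k$ I would choose auxiliary weights $a=1/2+l+\ve$, $c=1/2+i+\ve$, and $b=\max(a,c)$, with $\ve>0$ small enough that $a\le\si$ and $c\le\si$; this is possible since $\si>1/2+k\ge 1/2+\max(i,l)$. The continuous inclusions $\cF_\si\hookrightarrow\cF_a$, $\cF_{-a}\hookrightarrow\cF_{-b}$, $\cF_b\hookrightarrow\cF_c$, $\cF_{-c}\hookrightarrow\cF_{-\si}$ then produce the factorization
\[
\cF_\si\hookrightarrow\cF_a\xrightarrow{\cR^{(l)}}\cF_{-a}\hookrightarrow\cF_{-b}\xrightarrow{L^{(j)}}\cF_b\hookrightarrow\cF_c\xrightarrow{\cR_0^{(i)}}\cF_{-c}\hookrightarrow\cF_{-\si},
\]
in which every arrow is bounded, and the composite gives $\cO(1)\cdot\cO(|\om|^{-2})\cdot\cO(1)=\cO(|\om|^{-2})$, provided Lemma \re{large1} applies, i.e.\ $\beta>1/2+j+b=1+j+\max(i,l)+\ve$.

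The main obstacle, and the reason for this indirect choice of weights rather than the naive $a=b=c=\si$, is to absorb the term with $j=k=2$ under the sharp hypothesis $\beta>3$: had we used weight $\si>5/2$ in $L^{(j)}$, Lemma \re{large1} would demand $\beta>1/2+2+\si>5$. Maximizing $j+\max(i,l)$ over the decompositions $i+j+l=k$ gives exactly $k$ (attained at $(0,k,0)$, $(k,0,0)$ and $(0,0,k)$, as well as at the mixed splits with $j\ge 1$), so the worst constraint reduces to $\beta>1+k+\ve$. For $k\le 2$ this is implied by $\beta>3$ upon choosing $\ve$ sufficiently small. Summing the finitely many terms in (\re{leibplan}) then yields (\re{expbM}) and completes the proof.
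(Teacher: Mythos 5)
Your proposal is correct and follows essentially the same route as the paper: apply the Leibniz rule to $M=\cR_{0}L\cR$, bound the outer resolvent derivatives by $\cO(1)$ via (\ref{bR0}) (and its $V=0$ analogue), extract the $\cO(|\om|^{-2})$ decay from $L^{(j)}$ via Lemma \ref{large1}, and tune the intermediate weights so that the constraint from Lemma \ref{large1} reduces to $\beta>1+k$, hence $\beta>3$. The only cosmetic difference is that you use asymmetric weights $a,b,c$ chosen term by term, whereas the paper inserts a single intermediate weight $\si'$ per term (e.g.\ $\si'\in(5/2,\min\{\si,\beta-1/2\})$ for $\cR_0''L\cR$) and treats the remaining terms "similarly"; your version just makes that bookkeeping explicit.
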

\begin{proof}
The asymptotics (\ref{expbM}) follow from asymptotics (\ref{bR0})
for $\cR^{(k)}_{0}$ and  $\cR^{(k)}$, 
and asymptotics (\ref{bRV}) for $L^{(k)}$.
For example,  consider  the case $k=2$.
We have
\be\la{M2}
M''=\cR_{0}''L\cR+\cR_{0}L''\cR+\cR_{0}L\cR''
+2\cR_{0}'L'\cR+2\cR_{0}'L\cR'+2\cR_{0}L'\cR'.
\ee
For a fixed $\si>5/2$, let us choose
$\si'\in (5/2,\,\min\{\si,\beta-1/2\})$.
Then for the first term in (\ref{M2}) we obtain
by (\ref{bR0}) and (\ref{bRV})
\beqn\nonumber
\!\!\!&&\!\!\!\Vert \cR_{0}''(\om)L(\om)\cR_(\om) f\Vert_{\cF_{-\si}}
\le
\Vert \cR_{0}''(\om)L(\om)\cR(\om) f\Vert_{\cF_{-\si'}}\le
\Vert L(\om)\cR(\om) f\Vert_{\cF_{\si'}}
\\
\nonumber
\!\!\!&\le&\!\!\! \frac{C}{|\om|^{2}}
\Vert \cR(\om)f\Vert_{\cF_{-\si'}}
\le \frac{C_1}{|\om|^{2}}\Vert f\Vert_{\cF_{\si'}}
\le \frac{C_2}{|\om|^{2}}\Vert f\Vert_{\cF_{\si}},\quad\om\to\infty,\quad
\om\in\C\setminus\Ga.
\eeqn
Other  terms can be estimated similarly
choosing an appropriate values of $\si'$.
\end{proof} 
Now we prove the decay of $X_{3}(t)$.
By Lemma \ref{bM}
$$
(\chi N)''\in 
L^1(\Ga;\cL (\cF _{\si},\cF _{-\si}))
$$
with $\si>5/2$.
Hence, two times partial integration in (\re{X3}) implies that
$$
\Vert X_{3}(t)\Vert_{\cF _{-\si}}
 \le C(\si)\Vert \Psi_0\Vert_{\cF _\si}\langle t\rangle^{-2},
  \quad t\in\R
$$
Together with (\re{lins1}) and (\re{lins2})
this completes the proof of Proposition \ref{pr2}.
\appendix

\setcounter{section}{0}
\setcounter{equation}{0}
\protect\renewcommand{\thesection}{\Alph{section}}
\protect\renewcommand{\theequation}{\thesection.\arabic{equation}}
\protect\renewcommand{\thesubsection}{\thesection.\arabic{subsection}}
\protect\renewcommand{\thetheorem}{\Alph{section}.\arabic{theorem}}
\section{Decay of magnetic Schr\"odinger resolvent}
Here we prove  Lemma  \re{sp1} ii) for $s=1$.
First, we consider the case $V=0$.
Recall that
$$ 
H_0=(i\na+A)^2,\quad {\rm and}\quad R_{0}(\om)=(H_0-\om)^{-1}.
$$
\begin{lemma}\la{LA1}
Let $A(x)\in C^2(\R^3)$ be a  real function, and for some $\beta>2$
the bound holds
\be\la{nnA}
  |A(x)|+|\na A(x)|+|\na\na A(x)|\le C\langle x\rangle^{-\beta}.
\ee
Then for $l=-1,0,1$
and $\si>1/2$,  the asymptotics hold 
\be\la{AA}
 \Vert R_{0}(\om)\Vert_{{\cal L}(\cH^1_\si;\cH^{1+l}_{-\si})}
 ={\cal O}(|\om|^{-\fr{1-l}2}),
  \quad |\om|\to\infty,\quad\om\in\C\setminus[0,\infty).
\ee
\end{lemma}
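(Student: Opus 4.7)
First establish (\re{AA}) in the special case $A\equiv 0$, so that $R_0(\om)=R_\Delta(\om):=(-\Delta-\om)^{-1}$. The key structural fact is the commutativity $\na R_\Delta(\om)=R_\Delta(\om)\na$, which allows us to promote an extra derivative of the input into an extra factor $|\om|^{-1/2}$: for $f\in\cH^1_\si$ we have $\na f\in\cH^0_\si$, so the classical ($s=0$) Agmon bounds apply to both $f$ and $\na f$ and immediately yield the $l=0$ estimate. For $l=-1$ we use the rearrangement
\[
R_\Delta(\om)f=-\om^{-1}\bigl(f+\na\cdot R_\Delta(\om)\na f\bigr),
\]
which follows from $-\Delta=\na\cdot\na$ together with $(-\Delta-\om)R_\Delta=I$; the explicit prefactor $\om^{-1}$ combined with the $s=0,\,l=1$ Agmon estimate on $R_\Delta\na f\in\cH^1_{-\si}$ produces the advertised $\cO(|\om|^{-1})$ rate. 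Finally, the $l=1$ estimate follows from the elliptic identity $\Delta R_\Delta(\om)f=-f-\om R_\Delta(\om)f$ together with the $l=-1$ bound just obtained.

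\textbf{Stage 2 (magnetic perturbation).}
For general $A$, iterate the resolvent identity
\[
R_0(\om)=R_\Delta(\om)-R_0(\om)W_A R_\Delta(\om),\qquad W_A:=2iA\cdot\na+i\na\cdot A+A^2,
\]
twice, obtaining
\[
R_0(\om)=R_\Delta(\om)-R_\Delta(\om)W_A R_\Delta(\om)+R_0(\om)\bigl(W_A R_\Delta(\om)\bigr)^2.
\]
The first two terms fall directly under Stage 1, once one observes that the first-order operator $W_A$ maps $\cH^{1+l}_{-\si}$ into $\cH^{l}_{\si'}$ with $\si':=\min(\si,\beta-\si)$, thanks to the $\langle x\rangle^{-\beta}$-decay of $A,\na A,A^2$. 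The remainder already carries two factors $|\om|^{-1/2}$ coming from the two copies of $W_A R_\Delta$, so the $s=0$ bounds for $R_0$ recorded in Lemma \re{sp1}~ii) (augmented, in the $l=1$ case, by the elliptic identity $\Delta u=W_A u-\om u-g$ for $u=R_0(\om)g$) already suffice to close the estimate. Whenever $\si'<\si$ the embedding $\cH^{1+l}_{-\si'}\hookrightarrow\cH^{1+l}_{-\si}$ transfers the bound to the requested target space.

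\textbf{Main obstacle.}
The principal difficulty is that $W_A$ is a \emph{first-order} differential operator and hence cannot be absorbed as a lower-order perturbation in the manner of the scalar-potential proof of \ci{3Dkg}: each application of $W_A$ costs one derivative, which must be recovered elsewhere. The proof therefore combines two structural inputs in tandem, namely the commutativity $\na R_\Delta=R_\Delta\na$ of the free resolvent with $\na$ (which converts input regularity into $\om$-decay) and the polynomial decay $\langle x\rangle^{-\beta}$ of $A$ (which powers the weighted Neumann-like iteration and keeps the intermediate weights above $1/2$). The regime of very large $\si$ (where $\si'=\beta-\si\le1/2$) is disposed of by the trivial embeddings $\cH^1_\si\hookrightarrow\cH^1_{\si_0}$ and $\cH^{1+l}_{-\si_0}\hookrightarrow\cH^{1+l}_{-\si}$, which reduce matters to a fixed $\si_0\in(1/2,\beta-1/2)$ in the range already treated.
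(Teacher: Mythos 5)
Your proof is essentially correct but takes a genuinely different route from the paper's. The paper never expands around the free Laplacian: it exploits the exact commutation of $R_{0}(\om)$ with $\sqrt{H_0+1}$ (both being functions of the self-adjoint operator $H_0$) together with the fact that $\sqrt{H_0+1}\,\langle\na\rangle^{-1}$ and $\na(H_0+1)^{-1/2}$ are zeroth-order elliptic PDOs bounded on the weighted spaces $\cH^0_{\pm\si}$; this reduces each case $l=-1,0,1$ in two lines to the $s=0$ estimates of Lemma \re{sp1}~ii) with $V=0$ already proved in \ci{magS}, at the price of invoking the pseudodifferential machinery of \ci{Seeley,Sh,T}. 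Your twice-iterated resolvent identity around $R_\Delta$ avoids that functional calculus but replaces it with delicate derivative-versus-decay bookkeeping for the first-order perturbation $W_A$; and it still has to call on the same $s=0$ magnetic bounds to control the remainder $R_0(\om)(W_AR_\Delta(\om))^2$, so it is not more elementary in its inputs, only in its packaging. Your Stage 1 (commutation of $\na$ with $R_\Delta$) is exactly the free-case shadow of the paper's commutation idea and is fine.

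One quantitative claim in your Stage 2 is wrong, although the estimate still closes. The composition $(W_AR_\Delta(\om))^2$ does \emph{not} carry two factors $|\om|^{-1/2}$: each application of $W_A$ consumes one derivative, and a factor $|\om|^{-1/2}$ is earned only per derivative of smoothing that $R_\Delta$ is not asked to deliver, so $(W_AR_\Delta(\om))^2:\cH^1_{\si}\to\cH^0_{\si''}$ gains only $|\om|^{-1/2}$ (and only $\cO(1)$ if you insist on landing in $\cH^1_{\si''}$). The remainder is nevertheless under control because the third factor supplies the missing decay: for $l=-1$ you combine this $|\om|^{-1/2}$ with the $s=0$, $l=0$ bound $\Vert R_0(\om)\Vert_{\cL(\cH^0_{\si''},\cH^0_{-\si''})}=\cO(|\om|^{-1/2})$ to reach the required $|\om|^{-1}$; for $l=0$ you use the $s=0$, $l=1$ bound; and for $l=1$ your elliptic-identity supplement works precisely because the dangerous term $|\om|\,\Vert R_0(\om) g\Vert_{\cH^0_{-\si}}=\cO(|\om|^{1/2})\Vert g\Vert_{\cH^0_{\si''}}$ is compensated by the $|\om|^{-1/2}$ already harvested in $g=(W_AR_\Delta(\om))^2\psi$. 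With the accounting corrected in this way (and the intermediate weights kept above $1/2$ by your final embedding remark, which is available since $\beta>2$), the argument is complete.
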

\begin{proof}
{\it Step i)}
Consider  $l=0$. 
Applying the technique of PDO \cite{Sh, T}
we obtain for large $\om\in\C\setminus[0,\infty)$ 
\beqn\nonumber
\Vert R_{0}(\om)\psi\Vert_{\cH^{1}_{-\si}}
&\le&\Vert \nabla R_{0}(\om)\psi\Vert_{\cH^{0}_{-\si}}
+\Vert R_{0}(\om)\psi\Vert_{\cH^{0}_{-\si}}
\le C\Vert\sqrt{H_0+1}R_{0}(\om)\psi\Vert_{\cH^{0}_{-\si}}\\
\nonumber
&=&C_1\Vert R_{0}(\om)\sqrt{H_0+1}\,\psi\Vert_{\cH^{0}_{-\si}}
\le C_2|\om|^{-1/2}\Vert\sqrt{H_0+1}\,\psi\Vert_{\cH^{0}_{\si}}\\
\nonumber
&\le& C_3|\om|^{-1/2}\Vert\psi\Vert_{\cH^{1}_{\si}} 
\eeqn
by (\re{H}) with $k=s=l=0$ and $V=0$.
\smallskip\\
{\it Step ii)}
Similarly, (\re{H}) with $k=s=0$ and $l=1$,
implies for large $\om\in\C\setminus[0,\infty)$
\beqn\nonumber
\Vert R_{0}(\om)\psi\Vert_{\cH^{2}_{-\si}}
&=&\Vert (H_0+1)R_{0}(\om)\psi\Vert_{\cH^{0}_{-\si}}
\le C\Vert(\sqrt{-\Delta+1}\sqrt{H_0+1}\, 
R_{0}(\om)\psi\Vert_{\cH^{0}_{-\si}}
\\
\nonumber
&=&C\Vert\sqrt{-\Delta+1}\,R_{0}(\om)\sqrt{H_0+1}\,
\psi\Vert_{\cH^{0}_{-\si}} 
\le C_1\Vert R_{0}(\om)\sqrt{H_0+1}\,\psi\Vert_{\cH^{1}_{-\si}}\\
\nonumber
&\le& C_2\Vert\sqrt{H_0+1}\,\psi\Vert_{\cH^{0}_{\si}}
\le C\Vert \psi\Vert_{\cH^{1}_{\si}}
\eeqn
Then  (\re{AA}) with $l=1$ follows.
\smallskip\\
{\it Step iii)}
It remains to consider the case $l=-1$.
We have by (\re{AA}) with  $l=1$
\beqn\nonumber
\Vert R_{0}(\om)\psi\Vert_{\cH^{0}_{-\si}}
&=&\Vert \om^{-1}(-1+H_0 R_{0}(\om))\psi\Vert_{\cH^{0}_{-\si}}
\le C|\om|^{-1}\Big[\Vert\psi\Vert_{\cH^{0}_{-\si}}
+\Vert R_{0}(\om)\psi\Vert_{\cH^{2}_{-\si}}\Big]\\
\nonumber
&\le& C_1|\om|^{-1}\Vert\psi\Vert_{\cH^1_{-\si}}
\eeqn
\end{proof}
Now we consider  $V\not=0$.
\begin{lemma}\la{LA2}
Let for some $\beta>3$
$$
|V(x)|+|A(x)|+|\na A(x)|+|\na\na A(x)|
\le C\langle x\rangle^{-\beta}.
$$
Then for $k=0,1,2$, $\si>1/2+k$,  and $l=-1,0$, the asymptotics hold 
\be\la{AA1}
 \Vert R^{(k)}(\om)\Vert_{{\cal L}(\cH^1_\si;\cH^{1+l}_{-\si})}
 ={\cal O}(|\om|^{-\fr{1-l+k}2}),
  \quad |\om|\to\infty,\quad\om\in\C\setminus[0,\infty).
\ee
\end{lemma}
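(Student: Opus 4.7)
The plan is to bootstrap from Lemma~\re{LA1} (the case $V=0$, $k=0$) and from the $s=0$ part of Lemma~\re{sp1} ii) (already proved for $V\ne 0$) by means of the second resolvent identity
\[
R(\om)=R_{0}(\om)-R_{0}(\om)\,V\,R(\om).
\]
Differentiating $k$ times in $\om$ and applying the Leibniz rule produces
\[
R^{(k)}(\om)=R_{0}^{(k)}(\om)-\sum_{j=0}^{k}\binom{k}{j}R_{0}^{(k-j)}(\om)\,V\,R^{(j)}(\om),
\]
and I would estimate every term on the right by chaining already established bounds, with multiplication by $V$ serving as the ``glue'' between a free resolvent and a full resolvent.

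As a preliminary step I first extend Lemma~\re{LA1} to the derivatives $R_{0}^{(k)}$ with $k=1,2$. Since $R_{0}^{(k)}(\om)=k!\,R_{0}(\om)^{k+1}$ is a function of $H_{0}$, it commutes with $\sqrt{H_{0}+1}\simeq\langle\na\rangle$, and the very argument used in the proof of Lemma~\re{LA1} reduces a $\cH^{1}_{\si}\to\cH^{1+l}_{-\si}$ bound on $R_{0}^{(k)}$ to the $\cH^{0}_{\si}\to\cH^{l}_{-\si}$ bound furnished by Lemma~\re{sp1} ii) with $s=0$ specialized to $V=0$. This gives
\[
\Vert R_{0}^{(k)}(\om)\Vert_{\cL(\cH^{1}_{\si},\cH^{1+l}_{-\si})}=\cO(|\om|^{-(1-l+k)/2}),\quad\si>1/2+k,\ l\in\{-1,0,1\},\ k=0,1,2.
\]
A second elementary tool I need is that multiplication by $V$ is bounded from $\cH^{1}_{-\tau}$ to $\cH^{1}_{\beta-\tau}$ for every $\tau\in\R$, which follows from the product rule together with $|V|+|\na V|\le C\langle x\rangle^{-\beta}$.

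Next I treat each summand. The leading term $R_{0}^{(k)}f$ is handled at once by the extension above. For each remaining summand I pick an intermediate weight $\tau$ and chain: by Lemma~\re{sp1} ii) with $s=0$, $l=1$, I have $R^{(j)}(\om)f\in\cH^{1}_{-\tau}$ at rate $|\om|^{-j/2}$ whenever $\tau>1/2+j$; multiplication by $V$ sends this into $\cH^{1}_{\beta-\tau}$ with the same rate; and finally the extended Lemma~\re{LA1} sends $\cH^{1}_{\beta-\tau}$ into $\cH^{1+l}_{-\si}$ through $R_{0}^{(k-j)}$ at rate $|\om|^{-(1-l+k-j)/2}$, provided $\beta-\tau>1/2+(k-j)$. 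The two rates multiply to the desired $|\om|^{-(1-l+k)/2}$.

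The main delicacy will be the joint weight constraint $\tau\in(1/2+j,\,\beta-1/2-(k-j))$, which is nonempty exactly when $\beta>1+k$; since $\beta>3$ and $k\le 2$, this holds comfortably. The argument establishes (\re{AA1}) for $\si$ in a bounded range just above $1/2+k$, and the full range $\si>1/2+k$ then follows from the embeddings $\cH^{1}_{\si}\subset\cH^{1}_{\si_{0}}$ and $\cH^{1+l}_{-\si_{0}}\subset\cH^{1+l}_{-\si}$ valid whenever $\si\ge\si_{0}>1/2+k$. The only thing needing genuine care is verifying that a suitable $\tau$ exists for every pair $(j,k)$ with $0\le j\le k\le 2$ and for $\si$ in the target range; all remaining steps are direct compositions of already proven estimates.
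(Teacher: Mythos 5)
Your proposal is correct in substance but follows a genuinely different route from the paper for the derivative cases. The paper treats $k=0$ exactly as you do in spirit (Born splitting around $H_0$ by the scalar potential $V$), but for $k=1,2$ it abandons $H_0$ and instead writes $R'=(1-RW)R_{\Delta}'(1-WR)$ and the analogous identity for $R''$, i.e.\ it perturbs around the \emph{free Laplacian} with the full perturbation $W=2iA\cdot\na+i\na\cdot A+A^2+V$, and then invokes the classical asymptotics (\re{R3}) for $R_\Delta^{(k)}$ on the whole Sobolev scale. You instead differentiate the second resolvent identity $R=R_0-R_0VR$ and keep the magnetic part inside $H_0$ throughout, so the only ``glue'' between resolvents is multiplication by the scalar $V$ — precisely avoiding the first-order terms that the introduction flags as the main difficulty. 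The price you pay is having to extend Lemma~\re{LA1} to $R_0^{(k)}$, $k=1,2$, which the paper never does; your commutation argument $\sqrt{H_0+1}\,R_0^{(k)}=R_0^{(k)}\sqrt{H_0+1}$ combined with the $s=0$ bounds of Lemma~\re{sp1}~ii) specialized to $V=0$ is sound, but note that for $l=-1$ pure commutation does not suffice: you must also differentiate the identity $R_0(\om)=\om^{-1}(H_0R_0(\om)-1)$ (as in Step~iii of Lemma~\re{LA1}) and check that the worst term, $j=k$, still yields the rate $|\om|^{-(2+k)/2}$ — this is routine but should be written out. Two further small points: your multiplication bound $V:\cH^1_{-\tau}\to\cH^1_{\beta-\tau}$ uses $|\na V|\le C\langle x\rangle^{-\beta}$, which is part of the standing assumption (\re{V}) but not of the hypothesis as restated in the lemma, so you should say you are using it; and your weight bookkeeping (the window $\tau\in(1/2+j,\;\beta-1/2-(k-j))$, nonempty for $\beta>1+k$) is correct and matches the role played by the choice of intermediate weights in the paper's Lemma~\re{bM}. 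Overall your argument is arguably more self-contained with respect to the magnetic terms, while the paper's buys the full range of Sobolev indices for free from the explicit free resolvent.
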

\begin{proof}
For $k=0$ asymptotics (\re{H}) follow from the  Born splitting
$$
R(\om)=R_{0}(\om)[1+VR_{0}(\om)]^{-1}
$$
and (\re{AA}), since the norm of the operator
$[1+VR_{0}(\om)]^{-1}: \cH^1_{\si}\to \cH^1_{\si}$ is bounded
for large $\om\in\C\setminus[0,\infty)$ and $\si\in(1/2,\beta/2]$.
\smallskip\\
For $k=1$ and $k=2$ we use the identities
\be\la{R1}
  R'=(1-RW)R_{\Delta}'(1-WR)
=R_{\Delta}'-RWR_{\Delta}'-R_{\Delta}'WR+RWR_{\Delta}'WR.
\ee
\beqn\la{R2}
   R''&=&(1-RW)R_{\Delta}''(1-WR)-2 R'WR_{\Delta}'(1-WR)\\
   \nonumber
   &=&R_{\Delta}''-RWR_{\Delta}''-R_{\Delta}''WR
+RWR_{\Delta}''WR-2 R'WR_{\Delta}'+2 R'WR_{\Delta}'WR.
\eeqn
and well-known asymptotics for $R_{\Delta}(\om)=(-\Delta-\om)^{-1}$ 
(see \cite{jeka, 3Dkg}):
\be\la{R3}
\Vert R_{\Delta}^{(k)}(\om)\Vert_{\cL(\cH^s_\si,\cH^{s+l}_{-\si})}
=\cO(|\om|^{-\fr {1-l+k}2}),\quad\om\to\infty,\quad
\om\in\C\setminus[0,\infty)
\ee
for $s\in\R$, $l=-1,0,1$, $k=0,1,2,...$ and $\si>k+1/2$.
Identities (\re{R1})-(\re{R2}) and asymptotics (\re{R3}) imply 
(\re{AA1}) (cf.  \cite[Theorem 3.8]{magS}).
\end{proof}

\end{document}